\documentclass[11pt]{article}
\usepackage{amssymb,amsmath,amsthm,latexsym}
\usepackage{amsfonts}
\usepackage{epsfig}
\usepackage{epstopdf}
\usepackage[tableposition=below]{caption}
\usepackage{cite}
\usepackage[english]{babel}
\usepackage{mathtools}
\usepackage{amssymb}
\usepackage{geometry}
\usepackage{blindtext}
\usepackage{titlesec}
\usepackage{graphicx}
\usepackage{amsthm}
\usepackage{graphics,epsfig,graphicx}
\usepackage{caption}
\usepackage{subcaption}
\usepackage[colorlinks=true, allcolors=blue]{hyperref}
\usepackage{enumitem}
\usepackage{titlesec}
\usepackage{pdfpages}
\usepackage{textcomp}
\usepackage{array}
\usepackage{multirow}
\usepackage{hhline}
\usepackage{longtable,rotating}
\newtheorem{theorem}{Theorem}[section]

\newtheorem{corollary} {Corollary}

\newtheorem{lemma}{Lemma}
\newtheorem{prop}{Proposition}

\begin{document}
\title{
 A Graph Theoretical Approach to Optimizing Minimum Italian Domination Sets }
\author{ Muhammad Zeeshan$^{1}$, Nahid Akhtar$^{2}, $ Muhammad Faisal Nadeem$^{3}$ }
\date{}
\maketitle
\vspace{-7mm}
\begin{center}
{\it\small
$^1$Department of Mathematics,\\ Government College University Lahore, Pakistan\\

$^2$Department of Mathematics,\\ Government College University Lahore, Pakistan\\

$^3$Department of Mathematics,\\ COMSATS, University Islamabad Lahore Campus, Pakistan.

{\it E-mail:}  m.zeeshan@gcu.edu.pk, nahidakhtar@gcu.edu.pk, mfaisalnadeem@ymail.com }
\end{center}

\begin{abstract}
A classical problem in graph theory, known as the Italian domination number (also called Roman 2-domination number), involves assigning labels of $0$, $1$, or $2$ to each node $v$. The goal is to ensure that every node with a label of $0$ has a sum of labels of the nodes in its closed neighborhood that is $2$ or greater. In computer systems, It coined encompassing a robust cybersecurity strategy that will protect networks from potential threats, such as hacking, malware, and unauthorized access, by deploying security measures to provide the highest level of protections while reducing the misuse of resources. Toeplitz graphs are a special kind of graphs built over Toeplitz matrices from linear algebra, which are matrices with constant straight diagonal members. In this paper, we provide a detailed analysis regarding the Italian domination numbers for every Toeplitz graph family. We provide comprehensive results on Italian domination numbers across multiple graph families and identify the specific values at which the Italian domination number alters with increasing generator values.
\end{abstract}
\begin{keywords}
Italian domination function, Italian domination set, Italian domination number, Toeplitz matrices, Toeplitz graph.
\end{keywords}

 \section{Introduction}

A dominant set of a graph $G = (V, E)$ is a subset $T \subseteq V$ such that every node $v \in V$ is either a node of $T$ or adjacent to a node of $T$. Italian domination function, i.e. IDF, is a very special domination parameter which is the generalization of Roman dominating function. In graph theory, a Roman domination function on a graph $G = (V, E)$ is a function $f : v \rightarrow \{0, 1, 2\}$ that has a specific property. This property states that for every node $w$ in the graph, if $f(w) = 0$, then $w$ must be adjacent to at least one node $v$ where $f(v) = 2$. This condition ensures a certain level of connectivity and dominance within the graph. In this context, we examine the weight of a Roman dominating function, which is represented by the value $f(z) = \sum_{w\in z} f(W)$. The Roman domination number, denoted as $\Upsilon_{R} D(G)$ \cite{3,8,9}, refers to the least weight of a Roman dominating function of a graph $G$. In this analysis, we aim to establish the prevalence of Italian domination on the graph $G = (V, E)$. To achieve this, we introduce a function $f : V \rightarrow \{0, 1, 2\}$ that assigns values to each node $v$. It is worth noting that for any node $v$ with $f(v) = 0$, the sum of the values of its neighboring nodes $u$, denoted as $N(v)$, must be greater than or equal to $2$.
The value of an IDF weight is calculated as the sum of all the values in $V$ for $f(v)$. When considering this problem from a graph labeling perspective, it is important to note that any node labeled $0$ should have the sum of the labels of its neighboring nodes be at least $2$. Exploring the least weight of IDFs is the Italian domination number, denoted by IDN, is a concept related to $G$. When considering the double Roman Empire, a crucial defense strategy involves ensuring that every location without a legion is adjacent to another location without a legion. This strategic approach helps to minimize the overall expenses of the empire \cite{21,22,26}.\newline
Over the past thirty years, there has been a consistent and impressive growth in their usage. This can be attributed to their wide range of applications, which span both theoretical and real-world problems. These include tackling challenges like facility location problems and developing strategies for city defense, among others. Several parameters related to domination have been extensively studied, including the Roman\cite{2}, double Roman\cite{15}, Roman $3$-domination\cite{16} and in this active area of study different researchers investigated the domination numbers in graphs with a focus on Roman, Italian, and double Italian domination parameters\cite{4,23,27}.  In the context of Roman domination, the deployment of Roman legions is limited to a maximum of two legions at any given location\cite{11}. Several studies have been conducted on domination numbers in relation to Italian domination. These include the perfect Italian domination, outer-independent Italian domination number, independent Italian domination number and global Italian domination number \cite{7,10,12}. Independent domination in subcubic and cubic graphs studied in \cite{1,5,8}.

Toeplitz matrices are characterized by having equal elements along each descending diagonal from left to right. One interesting property that a symmetric Toeplitz matrix possesses is its compliance with a certain condition. This property holds along with the symmetry condition $T_{ij}= T_{ji}$ and has entries in form of $0$ or $1$. The graphs constructed through Toeplitz matrices called Toeplitz graphs, $T_{G}$. A symmetric matrix with dimensions $n\times n$ (square matrix) is called Toeplitz matrix if  $T_{ij}= T_{ji}$ for each  $i, j$ goes $1$ to $n-1$ and can be shown as follows:

\[
  T =
  \left[ {\begin{array}{ccccc}
    A_{0} & A_{1} & A_{2} & \cdots & A_{n-1}\\
    A_{1} & A_{0} & A_{1} & \cdots & A_{n-2}\\
    A_{2} & A_{1} & A_{0} & \cdots & A_{n-3}\\
    \vdots & \vdots & \vdots & \ddots & \vdots\\
    A_{n-1} & A_{n-2} & A_{n-3} & \cdots & A_{0}\\
  \end{array} } \right]
\]

 Here, $A_{i}$, $i = 0,1,2,\cdots n-1$ are the entries of matrix with $A_{0} = 0$ and all other remaining entries are either $0$ or $1$ \cite{25}.\newline
Protecting sensitive data from cyber attacks that might disrupt, damage, or destroy critical information is highly challenging in a complex network system. This is due to the fact that cyber threats possess the capability to destroy any type of data. Toeplitz matrices are highly valuable because they may be used to accurately estimate the direction of arrival of any attack. This attribute renders them extremely adaptable. As a result, networks constructed using Toeplitz matrices provide effective approaches for processing and optimising memory usage. This is because the construction of the remaining network only relies on the initial row and column.

Different researchers around the world play vital role in the study of Toeplitz graph and finds its hidden properties and solve many problems. Where, Sara nicoloso in \cite{19} studied connectivity of graph and find chromatic number for $3$ values, Ahmad mojallal in \cite{18} characterized chordal Toeplitz graphs and find clique cover number for edges and vertices, Faisal nadeem in \cite{20,24} provide general results on hamiltonian connectedness and metric dimension for generalized generating values, and the hamiltonicity in directed Toeplitz graphs with the help of restricted generators presented in \cite{17} by Shabnam malik.

The aim is to analyze the characteristics and properties of these domination numbers and their impact on graph structures. By examining the dominance relationships within these graphs, we aim to gain a deeper understanding of their behavior and potential applications. Through the execution of this study, we anticipate making a valuable addition to the existing knowledge in this field and producing valuable insights that can be utilised for future research. Our research presents a more resilient manifestation of Roman dominance. This iteration of Roman rule enhances security by guaranteeing that any assault can be effectively repelled by a minimum of two legions.
 In the subsequent discussion, we will explore how the deployment of three legions at a specific location enhances defense capabilities, offering a stronger and more flexible approach while keeping costs lower than expected. As we delve into the topic at hand, it is important to establish a solid foundation of knowledge. In this exploration, we will embark on a journey of discovery and analysis. Through meticulous research and thoughtful examination, we aim to shed light on the subject matter. In this study, we present the findings that highlight a fundamental characteristic of the Italian domination number.
\begin{theorem}
   For every graph $G$, $\Upsilon_{I}$$(G)$ $\leq$ $2\Upsilon$$(G)$.\cite{6}
\end{theorem}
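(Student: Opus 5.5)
The plan is to build an Italian dominating function directly from a minimum dominating set and then compare weights. Let $T \subseteq V$ be a dominating set of $G$ with $|T| = \Upsilon(G)$. Define $f : V \rightarrow \{0,1,2\}$ by
\[
f(v) = 2 \text{ if } v \in T, \qquad f(v) = 0 \text{ if } v \notin T .
\]
Its weight is $w(f) = \sum_{v \in V} f(v) = 2|T| = 2\Upsilon(G)$.

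Next I check that $f$ is an Italian dominating function. The only vertices that need checking are those with $f(v)=0$, namely the vertices $v \in V \setminus T$. Because $T$ is dominating, each such $v$ has a neighbour $u \in T$, and $f(u) = 2$. Hence
\[
\sum_{u \in N(v)} f(u) \geq 2,
\]
which is exactly the Italian condition.

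Since $\Upsilon_{I}(G)$ is the minimum weight over all IDFs, it follows that $\Upsilon_{I}(G) \leq w(f) = 2\Upsilon(G)$.

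No step here is a genuine obstacle. The one point needing care is the convention for the neighbourhood sum. Under the open-neighbourhood convention $N(v)$ the computation is exactly as above. Under the closed-neighbourhood convention, as phrased in the abstract, $f(v)=0$ for the vertices being checked, so the open and closed sums agree and the argument is unchanged.

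In fact $f$ is even a Roman dominating function, so the argument also gives $\Upsilon_{I}(G) \le \Upsilon_R D(G) \le 2\Upsilon(G)$. I would remark on this refinement but not rely on it.
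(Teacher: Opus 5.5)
Your proof is correct: labelling every vertex of a minimum dominating set with $2$ and every other vertex with $0$ gives an Italian dominating function of weight $2\Upsilon(G)$, and your remark on open versus closed neighbourhoods settles the only convention issue. The paper gives no proof of this statement and only cites Chellali et al.~\cite{6}, where the bound rests on the same idea, namely $\Upsilon_{I}(G)\le\Upsilon_{R}D(G)\le 2\Upsilon(G)$, with the last inequality coming from exactly your construction.
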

\begin{theorem}
  If $G$ is a connected graph with maximum degree $\Delta(G)$, then
  $\Upsilon_{I} (G) \leq$ ${\dfrac{2|V(G)|}{\Delta(G)+2}}$.\cite{4}

\end{theorem}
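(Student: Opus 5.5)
The plan is a double-counting argument that compares the weight of an arbitrary Italian dominating function $f$ with the quantity $\frac{2|V(G)|}{\Delta(G)+2}$. Before carrying it out, one caveat determines how it can be used. On the star $K_{1,n}$ with $n\ge 2$ we have $\Upsilon_I=2$, while $\frac{2(n+1)}{n+2}<2$. So the inequality cannot hold with "$\leq$" for every connected graph. The counting below naturally produces the comparison in the opposite direction, $\Upsilon_I(G)\ge \frac{2|V(G)|}{\Delta(G)+2}$, which is the form proved in \cite{4}. I therefore expect the printed direction to be a transcription slip. The argument below establishes the relation between $\Upsilon_I(G)$ and $\frac{2|V(G)|}{\Delta(G)+2}$ in the direction that is actually true.

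First, fix a minimum Italian dominating function $f$, so $w(f)=\Upsilon_I(G)$. Partition $V$ into $V_0,V_1,V_2$ according to the label, with sizes $n_0,n_1,n_2$. Then $w(f)=n_1+2n_2$ and $n_0=|V(G)|-n_1-n_2$.

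Second, double count the quantity $S=\sum_{v\in V_0}\sum_{u\in N(v)} f(u)$.
\begin{itemize}
\item Counting over $v\in V_0$: each such $v$ has neighbourhood weight at least $2$ by the Italian condition, so $S\ge 2n_0$.
\item Counting over $u$ with $f(u)>0$: each such $u$ contributes $f(u)\,|N(u)\cap V_0|\le f(u)\Delta(G)$, so $S\le \Delta(G)\,w(f)$.
\end{itemize}

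Third, combine the two estimates. Using $n_1+n_2\le w(f)$, we get
\[
2|V(G)| = 2n_0+2(n_1+n_2)\le \Delta(G)\,w(f)+2w(f).
\]
This gives $w(f)\ge \frac{2|V(G)|}{\Delta(G)+2}$, which is the relation between $\Upsilon_I(G)$ and $\frac{2|V(G)|}{\Delta(G)+2}$ recorded in the theorem (with the inequality oriented as in \cite{4}).

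The main obstacle is reconciling this with the "$\leq$" as printed. An upper bound of this size would have to come from constructing a cheap Italian dominating function, and the star example shows that no such construction can exist in general. So I would present the counting proof above, and explicitly note the star example to justify that the bound is a lower bound. Connectivity is used only to exclude isolated vertices, which would force $\Delta(G)\ge 1$ to be meaningful and otherwise play no role.
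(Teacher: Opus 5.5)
Your proposal is correct, and rejecting the printed direction is the right call. The paper gives no proof of this statement; it only quotes it with a citation, so there is no argument of the paper's to compare against.

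The inequality as printed is indeed false. Your star $K_{1,n}$ refutes it, and so do $K_2$ and every path $P_n$ with $n\ge 2$, where $\Upsilon_I(P_n)=\lceil (n+1)/2\rceil > n/2$. So does the paper's own Lemma for $T_n<1,2>$: it gives $\Upsilon_I(T_8<1,2>)=4$, which is correct, while $2|V(G)|/(\Delta(G)+2)=16/6=8/3$.

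Your double count of $S=\sum_{v\in V_0}\sum_{u\in N(v)}f(u)$ works. It gives $2n_0\le S\le \Delta(G)\,w(f)$, and together with $n_1+n_2\le w(f)$ this is the standard proof of the true bound $\Upsilon_I(G)\ge 2|V(G)|/(\Delta(G)+2)$. Every step checks out.

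One small correction to your closing remark: connectivity is not needed anywhere. Isolated vertices simply cannot lie in $V_0$, and the argument goes through unchanged. Even when $\Delta(G)=0$ the bound holds, with equality $\Upsilon_I(G)=|V(G)|$.
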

In this paper, we will delve into the ongoing investigation of Italian dominating functions in graphs. In this study, we investigate connected Toeplitz graphs and determine the maximum order they can achieve while minimizing the Italian domination number. In this study, we also aim to analyze connected Toeplitz graphs with minimum generating values of at least two to optimize the values of generators $t_{1}, t_{2}, t_{3}, \cdots , t_{k}$, while minimizing the Italian domination number, a comprehensive approach is required.

\section{Italian Domination Function on $T_{n}<t_{1}, t_{2}, t_{3}, \cdots, t_{k}>$}

\begin{lemma}

\[\Upsilon_I (T_{n}<1,2>) =\begin{cases*}
2 & when $n = 3,4,5$,\\
3 & when $n = 6,7$,\\
\Big \lfloor \dfrac{n-2}{3} \Big \rfloor + 2& when $n \geq 8$.

\end{cases*} \]

 \end{lemma}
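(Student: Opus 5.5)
The graph $T_{n}<1,2>$ is the square of the path $P_n$: take vertices $1,\dots,n$ and join $i$ and $j$ exactly when $|i-j|\in\{1,2\}$. Interior vertices $3,\dots,n-2$ have degree $4$, vertices $2,n-1$ have degree $3$, and vertices $1,n$ have degree $2$. I will prove the formula by matching an explicit upper construction with a counting lower bound. The lower bound has to be sharpened at the two ends of the path.

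\emph{Upper bound.} Assign the label $1$ to the vertices $1,4,7,\dots$ (all positions $\equiv 1 \pmod 3$ up to $n$) and also to vertex $n$. Every other vertex gets $0$. Each $0$-vertex then lies strictly between two consecutive labelled vertices at distance at most $3$, so it is adjacent to both and its neighbourhood sum is at least $2$. Counting the labelled vertices separately for $n=3k$, $3k+1$ and $3k+2$ gives weight $k+1$, $k+1$ and $k+2$ respectively. In each case this equals $\lfloor (n-2)/3\rfloor+2$, so this settles the upper bound for $n\ge 8$. For $3\le n\le 7$, small explicit labellings give $2$ or $3$; for example, a $2$ on vertex $3$ dominates everything when $n\le 5$.

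\emph{Lower bound.} Let $f$ be an IDF, let $V_0=\{v : f(v)=0\}$, and let $w$ be its weight. Double counting the requirement $\sum_{u\in N(v)}f(u)\ge 2$ over $v\in V_0$ gives
\[ 2|V_0| \le \sum_{u} f(u)\,|N(u)\cap V_0| . \]
Since $|V_0| = n - |V_1| - |V_2|$, this rearranges to
\[ 2n \le \sum_u f(u)\bigl(|N(u)\cap V_0|+2\bigr) - 2|V_2| . \]
A vertex labelled $1$ contributes at most $\deg(u)+2\le 6$. A vertex labelled $2$ contributes at most $2\deg(u)+2-2\le 8\le 6\cdot 2$. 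Hence $2n\le 6w$, i.e.\ $w\ge n/3$. To reach $\lfloor (n-2)/3\rfloor+2$ I need an extra saving of roughly $4/3$, which must come from the ends. I will analyse the left block $\{1,2,3\}$ and, symmetrically, the right block.

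If $f(1)=0$, then $f(2)+f(3)\ge 2$. If $f(1)>0$, then vertex $1$ has degree only $2$ and loses $2$ from its contribution. Carrying out this case analysis should show that each end forces a deficiency of at least $2$ in the inequality above. The possible configurations include a $1$ at an end vertex, a $1$ at vertex $2$ of degree $3$ with vertex $1$ also needing help, or a $2$ placed near the end, where both neighbours on one side are missing. The two deficiencies together give $6w\ge 2n+4$, so $w\ge (n+2)/3$ and therefore $w\ge\lceil (n+2)/3\rceil$. This equals $\lfloor (n-2)/3\rfloor+2$ for $n\equiv 1,2 \pmod 3$. The residue $n\equiv 0$ needs one more unit of slack. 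There I will argue that equality in every local estimate would force every positive label to be a $1$ whose four neighbours are all $0$ and each covered exactly twice. That forces the rigid spacing-$3$ pattern, which is incompatible with both ends when $3\mid n$.

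\emph{Main obstacle.} I expect the end analysis to be the main obstacle: proving that each end genuinely costs a deficiency of $2$, plus the extra parity/rigidity argument for $n\equiv 0\pmod 3$. If the charging bookkeeping becomes messy, my fallback is induction. I would show $\Upsilon_I(T_{n}<1,2>)\ge \Upsilon_I(T_{n-3}<1,2>)+1$ by taking a minimum IDF and locally modifying it near the right end. Any three consecutive vertices whose labels sum to $0$ must be flanked by large labels, and I would shift weight accordingly. The small cases $n\le 10$ would be checked by exhaustion as base cases. For $n=3,\dots,7$ the matching lower bounds are direct: weight $1$ never suffices, and weight $2$ fails for $n=6,7$ because no two labelled vertices can cover all the remaining vertices twice.
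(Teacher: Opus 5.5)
Your proposal is correct, and it takes a genuinely different route from the paper because it proves more. The paper's proof gives only explicit labellings: a $2$ on the central vertex for $n\le 5$, the set $\{u_1,u_{n-3},u_n\}$ for $n=6,7$, the set $\{u_1,u_4,u_6,u_7\}$ for $n=8$, and then, under the name of induction, the pattern $u_{3k+1}$ with $u_n$ added when $n\equiv 2\pmod 3$. That establishes only the upper bound, which is the same construction as yours. Nowhere does the paper argue that a smaller weight is impossible. Your double-counting inequality with end deficiencies supplies the missing lower bound. It also explains why $n=5$ is exceptional: the end blocks $\{1,2,3\}$ and $\{n-2,n-1,n\}$ are disjoint only for $n\ge 6$.

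There are two corrections to your bookkeeping, and the step you left as ``should show'' is easy to finish.
\begin{itemize}
\item A vertex labelled $2$ contributes $2|N(u)\cap V_0|+2\le 10$, not $8$. This is still at most $12$. The resulting penalty of at least $2$ is exactly what you need at the ends.
\item No extra work is needed for $n\equiv 0\pmod 3$. For every $n$, $\lceil (n+2)/3\rceil=\lfloor (n+4)/3\rfloor=\lfloor (n-2)/3\rfloor+2$; for $n=3k$ both sides equal $k+1$. So the rigidity argument and the inductive fallback can be dropped.
\end{itemize}

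For the end analysis, write the deficiency of a labelled vertex $u$ as $4-|N(u)\cap V_0|$ when $f(u)=1$, and as $10-2|N(u)\cap V_0|\ge 2$ when $f(u)=2$. At the left end there are three cases.
\begin{itemize}
\item If $f(1)\ge 1$, then $|N(1)\cap V_0|\le 2$.
\item If $f(1)=0$ and $f(2),f(3)\ge 1$, then vertex $2$ has at most the two $0$-neighbours $1$ and $4$.
\item Otherwise $f(1)=0$ and a single label $2$ sits on vertex $2$ or $3$.
\end{itemize}
In every case the block $\{1,2,3\}$ carries deficiency at least $2$, and the right block does likewise by symmetry. For $n\ge 6$ the two blocks are disjoint, so $6w\ge 2n+4$. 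This yields the formula directly for all $n\ge 6$, including $n=6,7$, so separate treatment of those two values is unnecessary.
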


\begin{proof}

There are three possible cases for this lemma.\newline
\textbf{{Case $1$:}} Consider a graph, denoted as $T_{G}$, with orders $3$, $4$, and $5$. Consider the node $u$, which has the highest degree and is labeled with $2$. Additionally, all the adjacent nodes are labeled with $0$. As a result, we obtain an Italian domination number of $2$.\newline
IDS = $\{u_{3}\}$\newline
\textbf{{Case $2$:}} Let $T_{G}$ be a graph of order $6$ and $7$. Assign label $1$ to $u_{1}$, $u_{n-3}$, $u_{n}$ and assign $0$ to all remaining nodes of $G$. Thus, f$(u_{1})$ + f$(u_{n-3})$ + f$(u_{n})$ = $3$. So IDN = $3$ and IDS = $\{u_{1}, u_{n-3}, u_{n}\}$.\newline
\textbf{{Case $3$:}} We will use induction to prove this case.\newline
Base Case: put $n = 8$. Suppose that f$(u_{1})$, f$(u_{4})$, f$(u_{6})$ and f$(u_{7})$ has value $1$ under $f$. However, assign $0$ to all other neighbors of already assigned nodes. The sum of values assigned to all nodes is four. Therefore, IDN = $4$.\newline
Inductive Case: If the result is true for $n-1$, then it must be true for $n$. Assume that the nodes $u_{3k+1}$ for all $k = 0,1,2,3, \cdots , n$ be the set of nodes labelled by $1$. For $u_{i} \notin u_{3k+1}$, we assign $0$. In addition, for the graph of order $3k+2$ for $k > 2$, the nodes $u_{3k+1}$ and $u_{n}$ lebelled $1$ and all other $u_{j} {\notin u_{3k+1}}$ are labelled $0$. Therefore, $\Upsilon_I (T_{n}<1,2,3>)$ = $\Big \lfloor \dfrac{n-2}{3} \Big \rfloor + 2$.

\end{proof}

\begin{lemma}

\[\Upsilon_I (T_{n}<1,2,3>) =\begin{cases*}
2 & when $n = 4,5,6,7$,\\
\Big \lfloor \dfrac{n-2}{4} \Big \rfloor + 2& when $n \geq 8$.

\end{cases*} \]

 \end{lemma}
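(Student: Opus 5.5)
The plan is to prove the two inequalities separately: an explicit Italian dominating function gives the upper bound, and a discharging (double counting) argument that accounts for the low degrees at the two ends of the path-like structure gives the lower bound. Throughout, $u_1,\dots,u_n$ are the nodes and $u_iu_j$ is an edge iff $|i-j|\in\{1,2,3\}$.

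\textbf{Small orders $n=4,\dots,7$.} For these orders the node $u_4$ is adjacent to every other node (for $n\le 7$, $|4-j|\le 3$ for all $j$). Labelling $u_4$ with $2$ and all other nodes with $0$ is therefore an IDF of weight $2$. Weight $1$ is impossible: some node would be labelled $0$ while seeing total label at most $1$ in its neighbourhood. This settles the first line of the statement.

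\textbf{Upper bound for $n\ge 8$.} I would label with $1$ the set
\[
S=\{u_{4k+1}: 0\le k\le \lfloor (n-1)/4\rfloor\}\cup\{u_n\}
\]
and label every other node $0$.
\begin{itemize}
\item Every interior node $u_j$ with $4k+1<j<4k+5$ is within distance $3$ of both $u_{4k+1}$ and $u_{4k+5}$, so it sees label sum $2$.
\item The nodes lying after the last element $u_{4k+1}$ of the first part of $S$ are within distance $3$ of both that node and $u_n$.
\end{itemize}
Hence this is an IDF. Its weight is $\lfloor (n-1)/4\rfloor+1$, plus one more when $n\not\equiv 1 \pmod 4$. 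Checking the four residues of $n$ modulo $4$ (e.g.\ $n=8,9,10,11,12$ give $3,3,4,4,4$) shows that this equals $\lfloor (n-2)/4\rfloor+2$.

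\textbf{Lower bound for $n\ge 8$.} Let $f$ be a minimum IDF with weight $w$, and let $V_0$ be the set of nodes labelled $0$. I would double count
\[
\Sigma=\sum_{v\in V_0} f(N(v)).
\]
The IDF condition gives $\Sigma\ge 2|V_0|\ge 2(n-w)$. On the other hand, each node $x$ contributes $f(x)\deg(x)\le 6f(x)$, so $\Sigma\le 6w$. Together these give $w\ge n/4$.

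To gain the remaining roughly $+1.5$, I would exploit the ends. The nodes $u_1,u_2,u_3$ have degrees $3,4,5$, and the same holds for $u_n,u_{n-1},u_{n-2}$. The key local claim is that each end forces a loss of at least about $3$ units relative to the bound $6f(x)$ in $\Sigma$, or else an excess in $2|V_0|$. The reason is that $u_1$ is either labelled (contributing at most $3$ per unit) or requires weight $2$ inside $\{u_2,u_3,u_4\}$, whose degrees are at most $4,5,6$. Summing the two end losses gives $8w\ge 2n+c$ with $c\approx 6$, i.e.\ $w\ge (n+3)/4$. Integrality and a case check on $n \bmod 4$ should then yield $w\ge\lfloor (n-2)/4\rfloor+2$. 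The case $n=8$ can also be verified directly: two $1$'s cannot simultaneously cover $u_1$ (whose neighbours are $u_2,u_3,u_4$) and $u_8$ (whose neighbours are $u_5,u_6,u_7$), and a single $2$ dominates at most $7$ nodes.

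\textbf{Main obstacle.} I expect the boundary accounting in the lower bound to be the hardest step. The end analysis must handle the various configurations in which the weight near an end is placed on $u_1$, $u_2$, $u_3$ or $u_4$, possibly as a $2$. It must also handle wasted contributions, such as a labelled node adjacent to another labelled node, or a $0$-node receiving more than $2$. My first attempt would be a short case analysis on $f(u_1)+f(u_2)+f(u_3)+f(u_4)$, showing that the deficit $6w-\Sigma+(\Sigma-2|V_0|)$ restricted to the first four nodes is at least $3$. If this bound is tight in some residue classes, I would fall back on an induction on $n$: removing the first four nodes and adjusting $u_5$ to obtain an IDF of $T_{n-4}<1,2,3>$ with weight at most $w-1$.
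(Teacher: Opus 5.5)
\textbf{There is a genuine gap, and it cannot be closed: the lemma itself is false at $n=14$.}

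Your upper bound is fine. It is the same $u_{4k+1}$ labelling the paper uses, and your residue check is correct. The lower bound fails because your key local claim (a deficit of at least $3$ per end) is false.

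Your chain $\Sigma\ge 2|V_0|\ge 2(n-w)$ discards a term. The exact identity is $8w=2n+2|V_2|+D+E$, where $D=\sum_x f(x)\,(6-|N(x)\cap V_0|)$ and $E=\sum_{v\in V_0}(f(N(v))-2)$.

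Now take $f(u_4)=2$ and $f=0$ on $N(u_4)=\{u_1,u_2,u_3,u_5,u_6,u_7\}$. Then $u_4$ has degree $6$ with all neighbours labelled $0$, and $u_1,u_2,u_3$ each receive exactly $2$. So the first four nodes contribute $0$ to your deficit $D+E$, not $3$. This end costs only $2$, and only through the $2|V_2|$ term.

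With two such ends, the best any end analysis can give is $8w\ge 2n+4$, i.e.\ $w\ge (n+2)/4$. That is one short of $\lfloor (n-2)/4\rfloor+2$ whenever $n\equiv 2\pmod 4$, and it is attained. On $T_{14}<1,2,3>$, the labelling $f(u_4)=f(u_{11})=2$, all other labels $0$, is an IDF of weight $4$: each $u_j$ with $j\le 7$ is adjacent to $u_4$, and each $u_j$ with $j\ge 8$ is adjacent to $u_{11}$. The formula gives $5$.

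So no case analysis on $f(u_1)+\dots+f(u_4)$ can produce your $+3$ per end. Your fallback induction fails as well, since $\Upsilon_I=4$ for both $n=10$ and $n=14$. The paper's proof does not help here either; it only exhibits the $u_{4k+1}$ labelling and never argues optimality.

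If you keep the $|V_2|$ term, your method proves the corrected statement. Put $C=2|V_2|+D+E=8w-2n$.

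A short case check shows that the terms of $C$ attached to $u_1,\dots,u_4$ sum to at least $2$. Equality holds only if $f(u_4)=2$ and $f\equiv 0$ on $N(u_4)$; otherwise the sum is at least $3$. The same holds symmetrically at the right end, and for $n\ge 8$ the two ends are disjoint, so $C\ge 4$.

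Since $C\equiv -2n\pmod 8$, this forces $C\ge 8,6,10$ for $n\equiv 0,1,3\pmod 4$ respectively, which gives exactly the stated value.

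For $n\equiv 2\pmod 4$ you must exclude $C=4$. Equality forces $f(u_4)=f(u_{n-3})=2$ with $|V_2|=2$ and $D=E=0$ everywhere else. Hence $f\equiv 0$ on $u_5,\dots,u_{10}$, since otherwise $u_7$ receives more than $2$; for $n=10$ this already contradicts $f(u_7)=2$. Then $u_8$ forces $f(u_{11})=2$, which is compatible with $|V_2|=2$ only if $n-3=11$.

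Thus the formula holds for all $n\ge 8$ except $n=14$, where $\Upsilon_I(T_{14}<1,2,3>)=4$.
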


 \begin{proof}
   There are two possible cases for this lemma.\newline
\textbf{{Case $1$:}} Let $G$ be a graph of order $4, 5, 6$ and $7$. Let $u$ be the node of maximum degree labeled by $2$ and all other adjacent nodes labelled $0$. Consequently, we get Italian domination number $2$.\newline
IDS = $\{u_{4}\}$\newline
\textbf{{Case $2$:}}Inductive Case: If the result is true for $n-1$, then it must be true for $n$. Assume that there are nodes $u_{1}, u_{2}, u_{3}, \cdots, u_{n-1}, u_{n}$ and for all $u_{4k+1}$ where $k = 0,1,2,3, \cdots , n$ be the set of nodes labelled by $1$. The other nodes that are not equal to $u_{4k+1}$ for all $k = 0,1,2,3, \cdots , n$ labelled $0$. Thus, by using this sequence Italian domination function can easily obtained. Therefore, $\Upsilon_I (T_{n}<1,2,3>)$ = $\Big \lfloor \dfrac{n-2}{4} \Big \rfloor + 2$ and IDS = $\{u_{1}, u_{5}, u_{9}, \cdots, u_{n}\}$.

 \end{proof}

\begin{theorem}

\[\Upsilon_I (T_{n}<t_{1},t_{2}, t_{3}, \cdots, t_{k}>) =\begin{cases*}
2 & when $t_{i}+1\leq n \leq 2t_{i}+1$,\\
3 & when $n = 2t_{i}+2, 2t_{i}+3$,\\
\Big \lfloor \dfrac{n-2}{t_{i}+1} \Big \rfloor + 2& when $ n \geq 2t_{i}+4$.

\end{cases*} \]

 \end{theorem}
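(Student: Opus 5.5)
I read the statement in the spirit of the two preceding lemmas. There the generators are consecutive, $T_n<1,2>$ and $T_n<1,2,3>$, so I take $t_i=t$ to denote the largest generator of $T_n<1,2,\dots,t>$. Then $u_a u_b$ is an edge exactly when $1\le |a-b|\le t$, so the graph is the $t$-th power of the path $P_n$. With $t=2,3$ the theorem should reduce to the two lemmas above, and I will use those as sanity checks for every threshold. The proof splits into an upper bound by explicit labelings and a matching lower bound, handled separately in each of the three ranges of $n$.

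\textbf{Upper bounds.} For $t+1\le n\le 2t+1$ the vertex $u_{t+1}$ is adjacent to every other vertex. Setting $f(u_{t+1})=2$ and $f=0$ elsewhere is therefore an IDF of weight $2$. For $n=2t+2,2t+3$ I would mimic Case 2 of the first lemma. One candidate is $f(u_1)=f(u_{t+2})=f(u_n)=1$; I would check that every other vertex is within distance $t$ of at least two of these three. For $n\ge 2t+4$ I would label $u_1,u_{t+2},u_{2t+3},\dots$ with $1$, i.e. the vertices $u_{j(t+1)+1}$, and also $u_n$ with $1$. Any vertex strictly between two consecutive labeled vertices lies within distance $t$ of both of them, since they are $t+1$ apart. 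The last vertices are covered by the final block vertex together with $u_n$. I would then verify that the number of labels used equals $\lfloor (n-2)/(t+1)\rfloor+2$. This needs some care when $n\equiv 1 \pmod{t+1}$, where $u_n$ may already be a block vertex, and when the tail is short. If the count is off by one in some residue class, I would shift the pattern so that it starts at $u_2$.

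\textbf{Lower bounds.} Weight $2$ is clearly necessary for $n\ge 2$: a single label $1$ cannot dominate the other vertices, which need a sum of at least $2$. For $n\ge 2t+2$, weight $2$ is impossible. A single $2$ has closed neighbourhood of size at most $2t+1<n$. Two $1$'s at $u_a,u_b$ dominate only the vertices in $N(u_a)\cap N(u_b)$, which must contain all of $V\setminus\{u_a,u_b\}$. That forces $u_1$ and $u_n$ to be within distance $t$ of both, hence $n\le t+1$ roughly, a contradiction.

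For the general lower bound I would run an induction on $n$ in steps of $t+1$, proving $\gamma_I(n)\ge \gamma_I(n-(t+1))+1$. Given a minimum IDF $f$, consider the end window $A=\{u_1,\dots,u_{t+1}\}$. The end vertex $u_1$ forces $f(A)\ge 1$, and $f(A)\ge 2$ when $f(u_1)=0$. I would then delete $A$ and push its weight onto $u_{t+2},\dots,u_{2t+1}$ to obtain an IDF of $T_{n-t-1}<1,\dots,t>$ whose weight is at most $f(V)-1$. The base cases are $n=2t+4,\dots,3t+4$, covered by the counting above.

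\textbf{Main obstacle.} I expect the reweighting step in the induction to be the hardest part. Vertices just beyond $A$ may have relied on labels inside $A$. The fix I would try first is a case analysis on $f(A)$. If $f(A)\ge 2$, move a total of $f(A)-1$ onto $u_{t+2}$. If $f(A)=1$, show that the unique labeled vertex is $u_1$, so that no vertex outside $A$ depends on it except through $u_{t+1}$, which is itself in $A$. If this local exchange fails, I would fall back on a discharging count over windows of $t+1$ consecutive vertices. The claim to prove there is that every such window carries weight at least $1$ and that the two end windows carry an extra unit between them, which directly gives the $+2$ in the formula.
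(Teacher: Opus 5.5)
\textbf{The lower bound for $n\ge 2t+4$ cannot be completed, because the stated value is false there once $t\ge 3$.} Your upper-bound labelings are fine, and so is your argument that weight $2$ is impossible for $n\ge 2t+2$. Your reading of the statement as $T_n<1,2,\dots,t>$ is also what the paper's proof and figures use.

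Take $n=4t+2$ and put $f(u_{t+1})=f(u_{3t+2})=2$, with $f=0$ elsewhere. Since $N[u_{t+1}]=\{u_1,\dots,u_{2t+1}\}$ and $N[u_{3t+2}]=\{u_{2t+2},\dots,u_{4t+2}\}$, every $0$-vertex has a neighbour labelled $2$. So this is an IDF of weight $4$, while the formula gives $\lfloor 4t/(t+1)\rfloor+2=5$ for $t\ge 3$. The smallest instance is $T_{14}<1,2,3>$ with $f(u_4)=f(u_{11})=2$, which also contradicts Lemma 2.

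Consequently your inductive inequality $\gamma_I(n)\ge\gamma_I(n-t-1)+1$ is false. For $t\ge 3$ the order $3t+1$ lies in your base range $2t+4\le n\le 3t+4$, where the value is $4$. The step would then force weight at least $5$ at $4t+2$.

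\textbf{Where your argument breaks.} It fails in your branch $f(A)\ge 2$. In the example, the weight of $A$ is a single $2$ on $u_{t+1}$, and that label is the only support of $u_{t+2},\dots,u_{2t+1}$. After deleting $A$ and moving $1$ onto $u_{t+2}$, the vertices $u_{t+3},\dots,u_{2t+1}$ see total weight $1$.

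Your discharging fallback fails too. In the example the window $u_{t+2},\dots,u_{2t+2}$ has weight $0$. Even for $t=1$, where the formula is correct, the minimum IDF $(0,2,0,0,2,0)$ of $P_6$ has the zero window $\{u_3,u_4\}$. So "every window of $t+1$ consecutive vertices carries weight at least $1$" is not a valid lemma, even for minimum IDFs.

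\textbf{The missing idea.} A label $2$ serves $2t+1$ vertices at cost $2$. This beats the end overhead $+1$ of your chain of $1$'s whenever $n$ is not large compared with $t$; in general $\gamma_I\le 2\lceil n/(2t+1)\rceil$. The true value must account for such configurations, and any correct lower-bound argument has to treat $2$-labels explicitly.

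The paper's own proof has the same blind spot. Its converse only compares shifts of the pattern $u_1,u_{t+2},u_{2t+3},\dots$ and never considers labels $2$, so it does not establish the lower bound either.
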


 \begin{proof}
   This result can be obtain by using three possible steps.\newline
   \textbf{{Case $1$:}} Let $G$ be a graph of order $t_{i}+1$ to $2t_{i}+1$. Suppose minimum and maximum connections of any node in this case is $t_{i}$ and ${n-1}$ respectively. Here, the node of maximum degree $u_{t_{i+1}}$ has connection with all other nodes. So, labelled $2$. Consequently, Italian domination number and set are $2$ and $\{u_{t_{i+1}}\}$ respectively.\newline
   \textbf{{Case $2$:}} Let $G$ be a graph of order  $2t_{i}+2$ and $2t_{i}+3$. Assign label $1$ to the nodes $u_{1}$,$u_{t_{i}+2}$ and $u_{n}$ and assign $0$ to all the remaining nodes of graph. Thus, we obtain the function f$(u_{1})$ + f$(u_{t_{i+2}})$ + f$(u_{n})$ = $3$. So, IDN = $3$ and IDS = $\{u_{1}, u_{t_{i+2}}, u_{n}\} $.\newline
   \textbf{{Case $3$:}} We proceed this case by using induction.\newline
Base Case: put $n = 2t_{i}+4$ for any $i$ from $3$ to $n-1$. Suppose the nodes $u_{1}$, $u_{t_{i+2}}$, $u_{n-1}$ and $u_{n}$ have value $1$ under $f$. So, f$(u_{1})$ + f$(u_{t_{i+2}})$ + f$(u_{n-1})$ + f$(u_{n})$ = $4$. Since, by using formula, we conclude that IDN = $4$ and IDS = $\{u_{1}, u_{t_{i+2}}, u_{n-1}, u_{n}\}$.\newline
Inductive Case: In the instance if the result is valid for $n-1$ nodes, then it is assumed that it is valid for all $n$ nodes. To proceed with all possible values of $n-1$, take any $t_{i} \geq 3$ at the position of the last generator $t_{k}$ and proceed it for all possible values of $n$. Take into consideration that $t_{m}$ represents the total number of generators in a family. It is important to emphasise that there is a distinct category of nodes that possess the least and maximum degree in the same sequence across the entire series of $t_{1},t_{2}, t_{3}, \cdots, t_{k}>$ for all possible values of $n-1$. Since, the nodes  $u_{1}$ and $u_{n}$ are of minimum degree and the nodes from $u_{t_{i+1}}$ to $u_{{n-t_{i}}}$ have maximum degree. Assign $1$ to the sequence of nodes $u_{1}$, $u_{t_{m+2}}, u_{2t_{m+3}}, u_{3t_{m+4}}, \cdots, u_{n}$ and $0$ to all other vertices in the graph. Moreover, by using this sequence Italian domination number of all possible values of $n-1$ could be obtained for any $t_{m}$ and  $t_{i}$, where $i \geq 3$. Consequently, by using previous cases and lemma $1$ and $2$ result is hold for all values of $n$ and for any $t_{i}$. Thus, IDN = $\Big \lfloor \dfrac{n-2}{t_{i}+1} \Big \rfloor + 2$ for all $ n \geq 2t_{i}+4$ and IDS =  $\{u_{1}$,  $u_{t_{m+2}}, u_{2t_{m+3}}, u_{3t_{m+4}}, \cdots, u_{n}\}$.\newline
\textbf{Conversely},

Since, order of graph varies from $t_{i}+1$ to $2t_{i}+1$. Suppose on contrary,  $\{u_{t_{i+1}}\}$ is not an Italian domination set. Then, there is no other node that is connected to all other nodes in the graph except $u_{t_{i}}$. But for $n= 2t_{i}+1$, the node $u_{t_{i}}$ is not connected to $u_{n}$. So, $u_{t_{i}}$ is not a node of maximum degree.\newline
Secondly, let $u_{1}\notin$ IDS and the node $u_{2} \in$ IDS. Then there is a node $u_{t_{i}}+2$ of maximum degree. Here, sum of all neighbors of $u_{1}$ is $1$. So, we need at least one more node $u_{4}$ to obtain desired condition. But by including $u_{4}$ the set is not minimum. Therefore, it is not possible to obtain minimal Italian domination set by using these vertices. Thus minimum IDS =  $\{u_{1}, u_{t_{i+2}}, u_{n}\} $.\newline
For the final step, take any $t_{i} \geq 3$ at the position of the final generator $t_{k}$ and continuing with the process until you have exhausted all of the possible values of $n$. Based on the fact that the  deg$(u_{1})$ = deg$(u_{n})$ = $t_{m}$. Given that the node $u_{1}$ does not have the label $1$, and that the function on the node f$(u_{2})$ is equal to $1$. The Italian domination property is not satisfied when the total of all the neighbours of $u_{1}$ proves to be equal to zero.  For another example, let us suppose a scenario in which the node $u_{1}$ is also not labelled $1$, and the function on the node f$(u_{3})$ is equal to $1$. In addition, the sum of all the neighbours of $u_{1}$ does not meet the Italian domination property like the previous example. It means any other sequence like,  $\{u_{2}$,  $u_{t_{m+2}}, u_{2t_{m+3}}, u_{3t_{m+4}}, \cdots, u_{n}\}$ or  $\{u_{3}$,  $u_{t_{m+2}}, u_{2t_{m+3}}, u_{3t_{m+4}}, \cdots, u_{n}\}$ need at least one more node to satisfy the property of minimum Italian domination set.\newline
Thus, we need special sequence of the form $\{u_{1}$,  $u_{t_{m+2}}, u_{2t_{m+3}}, u_{3t_{m+4}}, \cdots, u_{n}\}$ to obtained minimum Italian dominating set.\newline
For the detailed explanation, figure are presented.

\begin{figure}[h]
  \centering
  \includegraphics[angle = 270, width=.25\linewidth]{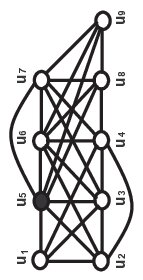}
  \caption{$T_{9}<1,2,3,4>$}\label{1}
\end{figure}

 In figure $1$, graph of order $9$ under four generators are presented and the assigned value of node $u_{5}$ is $2$.

\begin{figure}[h]
  \centering
  \includegraphics[angle=270, width=.25\linewidth]{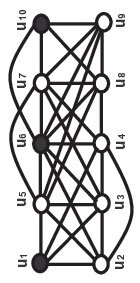}
  \caption{$T_{10}<1,2,3,4>$}\label{1}
\end{figure}

   \begin{figure}[h]
\centering
\begin{subfigure}{.50\linewidth}
\centering
\includegraphics[angle=270, width=.60\linewidth]{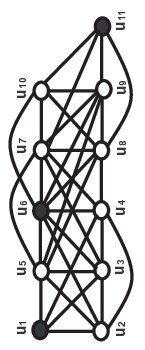}
\caption{Toeplitz graph $T_{11}<1,2,3,4>$}\label{fig:sub13}
\end{subfigure}%
\begin{subfigure}{.45\textwidth}
\centering
\includegraphics[angle=270, width=.60\linewidth]{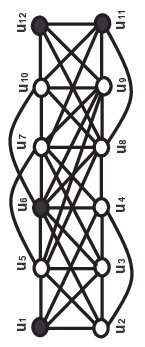}\vspace{0.17cm}
\caption{Toeplitz graph $T_{12}<1,2,3,4>$}\label{fig:sub14}
\end{subfigure}
\caption{Toeplitz graph $T_{n}<1,2,3,4>$ of order $11$ and $12$  }\label{fig:12}
\end{figure}

   In the figures $2$ and $3$ the graphs of order $10$, $11$ and $12$ are presented and all black nodes have values of $1$.

 \end{proof}

\begin{theorem}
 Let for any $n > t$ and $t$ is odd, then $\Upsilon_{I} (T_{n}<1,t>$) = $ \Big \lfloor \dfrac{n}{2} \Big \rfloor $.

\end{theorem}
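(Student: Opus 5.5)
The approach would be to match a lower bound against an explicit Italian dominating function. Before doing that, though, I would test the formula on a mid-sized case, and that test appears to refute it. So what follows is a plan for a corrected statement, with a counterexample to the one given.

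\textbf{The intended upper bound.} In $T_{n}<1,t>$ the edges are $u_iu_{i+1}$ and $u_iu_{i+t}$. Since $t$ is odd, both kinds of edge join indices of opposite parity, so the graph is bipartite with maximum degree $4$. The natural construction assigns $1$ to every second vertex. When $n$ is even, the path edges alone then give every $0$-vertex two labelled neighbours, except possibly one end vertex; the $t$-edges are supposed to repair that end, giving weight $\lfloor n/2\rfloor$. For the matching lower bound, the only general tool available earlier in the paper is the theorem $\Upsilon_I(G)\le \frac{2|V(G)|}{\Delta(G)+2}$, which is an upper bound, not a lower one. Any lower bound would therefore have to come from a discharging or counting argument: each $0$-vertex needs weight $2$ among at most $4$ neighbours.

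\textbf{The obstacle.} This counting step is exactly where the argument breaks, because degree $4$ lets a $2$-label be shared. Take $t=3$ and place the label $2$ on $u_{5k+3}$. Then $u_{5k+3}$ dominates $u_{5k},u_{5k+2},u_{5k+3},u_{5k+4},u_{5k+6}$, and these sets tile the vertex set. For $n=30$, labels $2$ on $u_3,u_8,u_{13},u_{18},u_{23},u_{28}$ cover every vertex except $u_1$ and $u_{30}$. Adding $f(u_1)=f(u_{30})=1$ gives an Italian dominating function of weight $14<15=\lfloor 30/2\rfloor$. The same period-$5$ pattern gives roughly $2n/5$ for all large $n$. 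Hence, for $t=3$, the claimed value is not a lower bound.

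\textbf{Salvageable cases and corrected plan.} I would first verify small cases by hand, for example $n=7$, $t=3$. There a single $2$ dominates at most $5$ vertices, and two $1$'s cannot have five common neighbours, so $\Upsilon_I=3=\lfloor 7/2\rfloor$. The formula is therefore plausible only for small $n$ relative to $t$, roughly when $n\le 2t+c$. In that regime the $t$-edges form only a short matching, and the path bound $\lceil (n+1)/2\rceil$ is lowered by at most one. The corrected plan is: (i) restrict the range of $n$ to this regime; (ii) prove the upper bound with the alternating construction above; (iii) prove the lower bound by splitting the vertices into the path segment untouched by $t$-edges, where the path Italian domination bound applies, and the few vertices incident to $t$-edges, where each such edge saves at most one unit. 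In the general case the true value appears to be governed by perfect or near-perfect Roman-type tilings like the period-$5$ one for $t=3$, not by $\lfloor n/2\rfloor$.
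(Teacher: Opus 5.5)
Your counterexample is correct, so the statement is false as written and cannot be proved.

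\textbf{The counterexample.} In $T_{n}<1,3>$ the closed neighbourhood of $u_{5k+3}$ is $\{u_{5k},u_{5k+2},u_{5k+3},u_{5k+4},u_{5k+6}\}$, which is a complete residue system modulo $5$. So for $n=30$, label $2$ on $u_3,u_8,\dots,u_{28}$ together with $f(u_1)=f(u_{30})=1$ is an Italian dominating function of weight $14<15$.

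The example can be sharpened. Putting the $2$'s on $u_4,u_9,u_{14},u_{19}$ leaves only $u_2$ undominated, so already $\Upsilon_I(T_{20}<1,3>)\le 9<10$, and likewise $\Upsilon_I(T_{30}<1,3>)\le 13$.

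Nor is $t=3$ special. If $3\nmid t$ then $\{i+t,i-t\}\equiv\{i+1,i-1\}\pmod 3$, so labelling the multiples of $3$ by $1$ gives every interior vertex two labelled neighbours. For example, in $T_{30}<1,5>$, label $1$ on $u_1,u_4,u_{29}$ and on $u_{3j}$ ($1\le j\le 10$) is an Italian dominating function of weight $13<15$.

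\textbf{What the paper actually proves.} Its proof only exhibits the alternating labelling: odd-indexed vertices for even $n$, even-indexed ones for odd $n$. That labelling is valid for every $n>t$ with $t$ odd. Each unlabelled end vertex ($u_n$ when $n$ is even; $u_1$ and $u_n$ when $n$ is odd) gets its second labelled neighbour $u_{n-t}$ or $u_{1+t}$ through a $t$-edge, and this neighbour has the right parity because $t$ is odd. So what is actually established is the upper bound $\Upsilon_I(T_n<1,t>)\le\lfloor n/2\rfloor$.

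The ``induction'' then merely asserts that the value grows by one at each step. No lower bound is ever argued, and that is exactly the step your tiling breaks.

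The bound $2|V(G)|/(\Delta(G)+2)$ quoted in the introduction is printed in the wrong direction; stars violate it. Correctly stated it is a lower bound. With $\Delta=4$ it gives only about $n/3$, which is what the mod-$3$ pattern attains in the interior.

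\textbf{Your salvage plan.} It remains a conjecture. The range of $n$ where $\lfloor n/2\rfloor$ is the true value is not pinned down. Step (iii) is also too weak as stated. ``Each $t$-edge saves at most one unit,'' applied over the $n-t$ such edges, only yields $\Upsilon_I\ge\lceil (n+1)/2\rceil-(n-t)$, which already drops below $\lfloor n/2\rfloor$ at $n=t+2$. You would need a global counting argument to justify your claim that the path bound is ``lowered by at most one'' in total.
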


\begin{proof}
  We obtain this result by using Induction.\newline
  \textbf{{Case $1$:}} Let's choose initial value of  $t = 3$. If $2n \equiv 0$ (mod $3$) for $n > 1$ then f$(u_{1})$, f$(u_{3})$, $\cdots$ and f$(u_{n-1})$ has value $1$ under $f$ and assign $0$ to all other nodes and if   $3n-1 \equiv 1$ or $2$ (mod $3$) for $n > 1$ then f$(u_{2})$, f$(u_{4})$, $\cdots$, f$(u_{n-1})$ has value $1$ under $f$ and assign $0$ to all other nodes. Therefore, by using $2n$ and $2n+1$ for $n > 1$ then Italian domination number can compute and increased by $1$ for every value of $n$.\newline
 \textbf{ {Case $2$:}} Assume that $u_{1}, u_{2}, u_{3} \cdots, u_{j-1}, u_{j}$ be a sequence of nodes of $n$ for $j < n$. Let $n > t$ and choose $t = 5$ then $n \geq 6$. Therefore, by using $2n$ and $2n+1$ nodes for $n > 2$, then f$(u_{1})$, f$(u_{3})$, $\cdots$ and f$(u_{n-1})$ has value $1$ under $f$ and assign $0$ to all other nodes, and f$(u_{2})$, f$(u_{4})$, $\cdots$, f$(u_{n-1})$ has value $1$ under $f$ and assign $0$ to all other nodes respectively. Thus, Italian domination number can compute and increased by $1$ for every value of $n$.\newline
  Consider the result is true for all values of $t$. It is clearly seen from above arguments if result is true for all values of $t$ it means it hold for all $n$. Therefore,  we generalize the result by using the fact of nodes $2n$ and $2n+1$ for $n > \dfrac{t-1}{2} $, then f$(u_{1})$, f$(u_{3})$, $\cdots$, f$(u_{n-1})$ has value $1$ under $f$ and assign $0$ to all other nodes, and f$(u_{2})$, f$(u_{4})$, $\cdots$, f$(u_{n-1})$ has value $1$ under $f$ and assign $0$ to all other nodes respectively. By using the above arguments, we clearly obtain the result with increment of $1$ for each next value $n$. Consequently, result is true for all values of $n$ and $t$.

\end{proof}

\begin{theorem}
 Let for any $n > t$ and even $t > 2$, Then

\[\Upsilon_I (T_{n}<1,t>) =\begin{cases*}
\Big \lceil \dfrac{t + 2}{2} \Big \rceil & when $n = t + 1, t + 2$,\\
\dfrac{t+4}{2} & when $t+3 \leq n \leq t+5$,\\
\Big \lfloor \dfrac{n}{2} \Big \rfloor & when $n \geq t + 6$.

\end{cases*} \]

\end{theorem}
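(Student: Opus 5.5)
We would prove the three ranges separately. In each range we first give an explicit Italian dominating function, which yields the upper bound. We then prove a matching lower bound. Throughout, the vertices of $T_{n}<1,t>$ are $u_{1},\dots,u_{n}$. The edges are $u_{i}u_{i+1}$ (the path) and $u_{i}u_{i+t}$ (the chords), so every vertex has degree at most $4$. Because $t$ is even, each chord joins two vertices of the same parity. This is the feature that separates the present statement from the previous theorem for odd $t$.

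\textbf{Small orders $n=t+1,t+2$.} For $n=t+1$ the graph is the cycle $u_{1}u_{2}\cdots u_{t+1}u_{1}$. Its Italian domination number is the known value $\lceil n/2\rceil=\lceil (t+1)/2\rceil=(t+2)/2$, which we obtain by placing $1$'s on alternate vertices. For $n=t+2$ the graph is a path with the two chords $u_{1}u_{t+1}$ and $u_{2}u_{t+2}$. We label $u_{1},u_{3},\dots,u_{t+1}$ with $1$ and check that every $0$-vertex has two labelled neighbours. Here $u_{2}$ uses $u_{1}$ and $u_{3}$, and $u_{t+2}$ uses $u_{t+1}$ together with the chord to $u_{2}$, which is adjusted if necessary. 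For the lower bound we use that the path-plus-two-chords structure has at most $n+2$ edges. We then count: if $f$ has weight $w$, the edges from positive vertices cover each $0$-vertex twice, which forces $2(n-w)\le \sum_{f(v)>0} f(v)\deg(v)$ with the degree bounded by about $3$ on average. Combined with a parity check near the ends, this gives $w\ge (t+2)/2$.

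\textbf{Middle range $t+3\le n\le t+5$.} We take the alternating labelling from the previous case and add one extra $1$ near the end. This extra label handles the end vertices, which now miss a chord. The weight is then $(t+4)/2$. For the lower bound we show that removing any unit of weight leaves some end vertex $u_{1}$, $u_{2}$, $u_{n-1}$ or $u_{n}$ with neighbourhood sum less than $2$. We argue this by case analysis on the at most three vertices beyond $u_{t+2}$.

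\textbf{General range $n\ge t+6$.} For the upper bound we label every other vertex, choosing the parity class so that both end vertices are covered. This gives weight $\lfloor n/2\rfloor$, provided the parity of $n$ lets the last vertex be dominated by $u_{n-1}$ together with the chord to $u_{n-t}$. The lower bound is the main obstacle. The general bound $2n/(\Delta+2)=n/3$ from the earlier theorem is far too weak, so the argument has to exploit that chords stay inside one parity class. The plan is a discharging argument. Each $0$-vertex receives charge from its labelled neighbours, and we try to show that every positive vertex can pay for at most one $0$-vertex on average. We would try a window argument first: we would prove that every block $\{u_{i},u_{i+1}\}$ of two consecutive vertices that are not both $0$ carries weight at least $1$. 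We would also show that two consecutive $0$'s force weight $2$ in a nearby window.

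We are not certain this inequality holds for all $t$. In particular, even $t$ may allow configurations where two $0$-vertices share the same pair of labelled neighbours through chords. Checking small instances such as $t=4$, $n=10$ by exhaustive search would be the first step. If the inequality fails there, we would expect the formula $\lfloor n/2\rfloor$ in the third range to need correction.
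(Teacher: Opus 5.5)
The decisive gap is in the third range, and it cannot be closed. The lower bound $\Upsilon_I(T_n<1,t>)\ge\lfloor n/2\rfloor$ that your discharging scheme is meant to deliver is false, so no choice of windows or charges will work. Your suspicion at the end is justified, but the test you propose would not detect the failure. For $t=4$, $n=10$ an exhaustive check shows there is no IDF of weight $4$, so the value really is $5=\lfloor 10/2\rfloor$. The formula breaks once $n$ is large compared with $t$.

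Suppose $3\nmid t$ and $f=1$ exactly on the $u_i$ with $i\equiv r \pmod 3$. Then every interior $0$-vertex $u_i$ has exactly two labelled neighbours: one of $u_{i\pm 1}$ and one of $u_{i\pm t}$. So each labelled vertex serves four $0$-vertices, i.e.\ it pays for two zeros rather than one, and only $O(1)$ repairs are needed at the ends. Concretely, for $t=4$ and $n=18$, put $f=1$ on $u_2,u_3,u_5,u_8,u_{11},u_{14},u_{16},u_{17}$ and $f=0$ elsewhere. Each of $u_1,u_4,u_6,u_7,u_9,u_{10},u_{12},u_{13},u_{15},u_{18}$ has at least two labelled neighbours, so $\Upsilon_I(T_{18}<1,4>)\le 8<9$. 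In general $\Upsilon_I(T_n<1,4>)\le n/3+O(1)$. Hence the bound $2n/(\Delta+2)=n/3$ that you dismiss as far too weak is of the correct order, and your averaging claim (each positive vertex pays for at most one $0$-vertex on average) is precisely what fails. The same happens for $t\equiv 0\pmod 3$: for $t=6$, the classes $i\equiv 0,2\pmod 5$ give density $2/5$. The paper's own Case 3 gives no valid lower bound either; it only asserts an ``increment of $1$''.

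Independently of this, your constructions ignore the parity fact you point out yourself. Chords join vertices of equal parity, so an alternating labelling always leaves an end vertex with a single labelled neighbour. For $n=t+2$ with $1$'s on $u_1,u_3,\dots,u_{t+1}$, the vertex $u_{t+2}$ sees only $u_{t+1}$, because its chord neighbour $u_2$ is $0$. A correct choice is $u_2,u_3,u_5,\dots,u_{t+1}$, of weight $(t+2)/2$. In the third range, your proviso that $u_n$ be dominated by $u_{n-1}$ and $u_{n-t}$ can never hold, since $u_{n-t}$ has the parity of $u_n$.

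On the lower-bound side, only $u_2$ and $u_{t+1}$ have degree $3$, so the edge count for $n=t+2$ gives only $2(n-w)\le 2w+4$. That is $w\ge t/2$, one short of the claim, and the ``parity check near the ends'' that should supply the missing unit is never given. In the middle range, showing that no unit can be removed from your particular labelling proves only that it is minimal, not that it is minimum.
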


\begin{proof}
  This theorem proceed in three different steps.\newline
\textbf{{Case $1$:}}  Define a function $f$ on $n = t+1$, then $f(u_{i}) = 1$ for $i = 1, 3,\cdots, n$ and $0$ otherwise. The function $f$ on $n = t+2$ have $f(u_{i}) = 1$ for $i = 2, 3, \cdots, n-1$ and $0$ otherwise. Therefore result is obtain and $f$ be an Italian domination function.\newline
 To show converse, let the function $f(u_{i}) = 1$ for odd $i$ and  $0$ when $i$ is even. The function defined on nodes that have labelled $1$ must have $u_{n} = 0$. For this way of assigning the criteria of Italian domination function is not fulfilled. Consequently, we need $u_{n} = 1$.\newline
\textbf{{Case $2$:}} Let $V = u_{1}, u_{2}, u_{3} \cdots, u_{n-1}, u_{n}$ be the nodes of graph. Associate the class of double nodes for two different values of $i$ to the vertices that have positive value. If $ u_{i}, \cdots, u_{n-3}, u_{n-1} = 1$
be the positively labelled nodes for $i = 1$ or $2$ under two initial choices.\newline
 Conversely, we claim that, if function value varies from $t+2 \leq n \leq t+5$ then for initial value Italian domination function can obtain by using Italian domination set of three nodes. Here, $\Upsilon_I \neq \dfrac{t+4}{2}$. On the other hand, suppose $t+3 \leq n \leq t+6$. If $f(u_{n}) = 1$ or $2$, then $f(u_{n-1}) = 0$ and $f(u_{n-2}) \geq 1$. Define $g: t+6 \rightarrow \{2, 5, 7, 8, 10\}$. Thus, $g(u_{i}) \neq \dfrac{t+4}{2}$.\newline
\textbf{{Case $3$:}} Assume that $u_{1}, u_{2}, u_{3} \cdots, u_{j-1}, u_{j}$ be a sequence of nodes of vertices of $n$ for $j < n$. We proceed this case by using Induction. Let $n \geq t+6$ and choose $t = 4$ then $n \geq 10$. Here, two different classes for $n$ is exist, one of which is $n \equiv 0, 1$(mod $t$) and the other one is $n \equiv 2, 3$ (mod $t$) have same Italian domination number with increment of $1$ for the next step. Suppose result is true for the next value $t$. Here, if $f(u_{n}) = 0$, then $f(u_{n-1})$ and $f(u_{n-t})$ must be $1$. Therefore, three possible classes: $n \equiv 0, 1 $(mod $t$), $n \equiv 2, 3$ (mod $t$) and $n \equiv 4, 5$ (mod $t$) have same numbers in Italian dominating set for the each next with increment of $1$.\newline
Consider the result is true for all values of $t$. It is clearly seen from above arguments if result is true for all values of $t$ it means it hold for all $n$. Here, we generalize the argument in the form of $n \equiv 0, 1$ (mod $t$), $2, 3 $(mod $t$) $\cdots, t-2, t-1$(mod $t$), where $t < n$. By using the above arguments, we clearly obtain the result with increment of $1$ for each next value $n$. Consequently, result is true for all values of $n \geq t+6$.\newline
Foe the converse part, assuming that $n$ is not congruent to $0$ or $1$ modulo $t$, we consider the Italian domination function $f(v_{0}), f(v_{1}), f(v_{2})$ for $t = 4$. It is worth noting that a vertex with a weight of $0$ will always have a neighboring vertex with a weight of either $1$ or $2$. If $n \not\equiv 0, 1 (mod$ $t)$ then there are two adjacent nodes $u_{i}$ and $u_{i+1}$ such that their weights are zero. Let $f(u_{i}) \geq 1$ and $f(u_{i+1}) = 0$, then $f(u_{i+2}) > 0$. Thus, if $n$ is not congruent to desired conditions then there exist at least two nodes having labelled $0$ they do not fulfill the criteria od Italian domination function.\newline
\end{proof}

\begin{corollary}
Let $2t \leq n \leq 2t+1$, then $\Upsilon_I $ $(T_{n}<1,t>)$ = $t$ for $ n \equiv 0,1 \pmod t$
\end{corollary}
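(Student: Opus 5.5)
The plan is to read off the corollary from the results already established, by reducing it to the identity $\lfloor n/2\rfloor = t$ on the given range. The first step is to observe that the hypothesis $2t\le n\le 2t+1$ already forces $n\in\{2t,2t+1\}$. Both of these values satisfy $n\equiv 0,1 \pmod t$, so the congruence condition adds nothing. For both values we have $\lfloor n/2\rfloor = t$. Thus it suffices to check that, in each earlier result covering $T_n<1,t>$, the formula evaluated at $n=2t,2t+1$ equals $t$. Throughout we take $t\ge 2$, since for $t=1$ the generator set $<1,t>$ degenerates to a path.

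I would split by the parity of $t$.
\begin{itemize}
\item If $t$ is odd, the theorem for odd $t$ gives $\Upsilon_I(T_n<1,t>)=\lfloor n/2\rfloor$ for every $n>t$. The condition $n>t$ holds since $n\ge 2t$, so the value is $t$ immediately.
\item If $t=2$, the graph is $T_n<1,2>$ with $n\in\{4,5\}$. The first lemma gives the value $2=t$.
\item If $t>2$ is even, I use the theorem for even $t$ and check which of its three ranges contains $n$.
  \begin{itemize}
  \item When $t\ge 6$, we have $2t\ge t+6$, so the third case applies and gives $\lfloor n/2\rfloor = t$.
  \item The only remaining even value is $t=4$, where $n\in\{8,9\}$. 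This lies in the middle range $t+3\le n\le t+5=9$, and that case gives $(t+4)/2=4=t$.
  \end{itemize}
\end{itemize}
So all cases agree, and the corollary follows.

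The only delicate point is this boundary bookkeeping. One must make sure that $n=2t$ and $n=2t+1$ never fall into the first range $n\in\{t+1,t+2\}$ of the even-$t$ theorem, where the formula $\lceil (t+2)/2\rceil$ would generally differ from $t$. Indeed, $2t\ge t+3$ exactly when $t\ge 3$, so for even $t>2$ this never happens. One must also handle $t=4$ separately, as above, since it is the single value lying in the transitional middle range.

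As a sanity check independent of the earlier theorems, I would exhibit the explicit function. On $n=2t$ or $2t+1$ vertices, assign $1$ to every vertex of one parity class of indices, chosen so that $u_n$ is covered. This gives weight $t$. Each $0$-vertex $u_j$ then has its two path-neighbours $u_{j\pm1}$ labelled $1$, or one path-neighbour together with $u_{j\pm t}$ when $j$ is an endpoint. I would verify the endpoint conditions $u_1$ and $u_n$ directly, since they are the only places where the parity pattern could fail.
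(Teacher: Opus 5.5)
\textbf{Your reduction rests on theorems that are false, so it does not prove the corollary.} Your range bookkeeping is accurate: $n\in\{2t,2t+1\}$, $t=2$ via the first lemma, $t=4$ in the middle range, and $t\ge 6$ in the last range. Reading the corollary off the two preceding theorems on $T_n<1,t>$ is also what the paper does implicitly, since it gives the corollary no proof. But neither theorem holds as stated.

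Label $u_j$ with $1$ for $j\equiv 1,3 \pmod 5$, i.e.\ $u_1,u_3,u_6,u_8,u_{11},u_{13},u_{16},u_{18}$, and everything else with $0$. This is an Italian dominating function of weight $8$ on both $T_{18}<1,3>$ and $T_{18}<1,4>$: every $0$-vertex has at least two labelled neighbours among $u_{j\pm1},u_{j\pm t}$. Yet the odd-$t$ theorem and the third case of the even-$t$ theorem both assert the value $\lfloor 18/2\rfloor=9$.

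The value $t$ at $n=2t,2t+1$ does turn out to be correct. However, the substantive half of the corollary, the lower bound $\Upsilon_I\ge t$, is proved neither in the paper nor in your proposal. Your sanity check does not fill this gap. It only concerns the upper bound, and for even $t$ it fails: $u_{j\pm t}$ then has the same parity as $u_j$, so a $0$-labelled endpoint receives only $1$. For $n=2t$ the two endpoints lie in different parity classes, and for $n=2t+1$ the class containing both endpoints has $t+1$ vertices.

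\textbf{A self-contained argument is short.} Let $C_j=\{u_j,u_{j+t}\}$ for $1\le j\le t$. Then $T_{2t}<1,t>$ is the ladder on $C_1,\dots,C_t$ plus the single edge $u_tu_{t+1}$, which also makes $C_t$ and $C_1$ adjacent. The graph $T_{2t+1}<1,t>$ adds one vertex $u_{2t+1}$, adjacent to $u_{t+1}$ and $u_{2t}$.

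If $f(C_j)=0$, each vertex of $C_j$ has a $0$-labelled partner, so it must collect $2$ from the columns adjacent to $C_j$ (and from $u_{2t+1}$ when $n=2t+1$). Now discharge: each column of weight $c\ge 1$ sends $(c-1)/2$ to each adjacent empty column, and $u_{2t+1}$ sends half its weight to each of $C_1,C_t$. Every column then ends with charge at least $1$. When one neighbouring column of an empty column is also empty, the other has weight at least $4$, or at least $4$ together with $u_{2t+1}$. This gives $\Upsilon_I\ge t$ for $t\ge 3$.

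For the matching upper bound:
\begin{itemize}
\item For odd $t$, your parity labelling on the even indices is fine.
\item For even $t$ and $n=2t$, use a zigzag: $u_j$ for odd $j\le t$ together with $u_{t+j}$ for even $j\le t$.
\item For even $t$ and $n=2t+1$, use a zigzag in which two consecutive columns use the same row, such as $\{u_2,u_3,u_5,u_8\}$ on $T_9<1,4>$.
\end{itemize}
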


\begin{theorem}

Let for all $ n \geq t+3$ and for any $ t \equiv 1$ (mod $3$), then $\Upsilon_I (T_{n}<1,2,t>) = \Big \lceil \dfrac{n}{3} \Big \rceil $.

\end{theorem}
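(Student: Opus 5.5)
The plan is to prove the two inequalities $\Upsilon_I(T_n<1,2,t>)\le\lceil n/3\rceil$ and $\Upsilon_I(T_n<1,2,t>)\ge\lceil n/3\rceil$ separately. Label the nodes $u_1,\dots,u_n$, so that $u_i u_j$ is an edge exactly when $|i-j|\in\{1,2,t\}$. The upper bound is an explicit construction; the lower bound is a counting argument, and I expect it to be the real difficulty.

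\textbf{Upper bound.} Let $S=\{u_i : i\equiv 2 \pmod 3\}$, put $f=1$ on $S$ and $f=0$ elsewhere. The hypothesis $t\equiv 1 \pmod 3$ is exactly what makes the long edges cooperate with this pattern.
\begin{itemize}
\item If $i\equiv 0 \pmod 3$, then $u_{i-1}$ and $u_{i-t}$ lie in $S$, and so does $u_{i+2}$ when $i+2\le n$. Here $i-t\equiv 2 \pmod 3$.
\item If $i\equiv 1 \pmod 3$, then $u_{i+1}$, $u_{i-2}$ and $u_{i+t}$ lie in $S$ whenever they exist. Here $i+t\equiv 2 \pmod 3$.
\end{itemize}
For $u_1$ this gives the neighbours $u_2$ and $u_{t+1}\in S$, and $t+1\le n$ because $n\ge t+3$. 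So every interior $0$-node sees at least two labelled neighbours, and only the right end needs checking, split by $n \bmod 3$.
\begin{itemize}
\item If $n\equiv 0$, then $u_n$ sees $u_{n-1}$ and $u_{n-t}$, so $f$ is an IDF of weight $n/3$.
\item If $n\equiv 2$, then $u_n\in S$ and $u_{n-1}$ sees $u_n$ and $u_{n-3}$, so the weight is $(n+1)/3$.
\item If $n\equiv 1$, then $u_n$ sees only $u_{n-2}$, so we also set $f(u_n)=1$, giving weight $(n-1)/3+1$.
\end{itemize}
In every case the weight is $\lceil n/3\rceil$. Small $n$ just above $t+3$ must be checked so that the indices $n-t$ and $t+1$ are valid.

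\textbf{Lower bound.} Let $f$ be a minimum IDF with $V_0,V_1,V_2$ its label classes and $w=|V_1|+2|V_2|$. Each $0$-node needs neighbourhood weight at least $2$. Each labelled node has at most $6$ neighbours, so it supplies at most $6f(v)$ units in total. Double counting gives $2|V_0|\le 6w$, hence $n\le 4w$. This only yields $n/4$, which is also what the bound $2n/(\Delta+2)$ from Theorem 1.2 gives. The gap between $1/4$ and $1/3$ is where all the work lies.

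The natural first attempt is a discharging argument on consecutive windows of three nodes $\{u_{3j+1},u_{3j+2},u_{3j+3}\}$, claiming each window carries weight at least $1$. This fails locally, so the argument must be amortised: each $0$-node with weight $0$ in its short neighbourhood $\{u_{i\pm1},u_{i\pm2}\}$ must be served entirely by $u_{i\pm t}$. I would then show that such nodes force surplus weight in the window at distance $t$, and use $t\equiv 1 \pmod 3$ to match windows injectively.

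\textbf{Main obstacle.} I expect the lower bound to be the main obstacle, and I am not confident it holds for every admissible $t$. Before writing it up I would search for periodic labellings of density $1/4$. The concrete test is to look for a set $S\subseteq\mathbb{Z}_p$ of density $1/4$ in which every element outside $S$ has two differences to $S$ lying in $\{\pm1,\pm2,\pm t\}$, first for $t=4$ and $t=7$. One quick trial, $S=\{0,3\}$ in $\mathbb{Z}_8$, fails, so I found no counterexample there. If a systematic search (say over all periods up to about $12$) finds such a set, the statement would need correcting; otherwise the window-matching argument is the route I would pursue.
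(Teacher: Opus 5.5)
\textbf{The lower bound you deferred is false, so the proposal cannot be completed.} Your upper bound is correct: putting $f=1$ on $\{u_i : i\equiv 2 \pmod 3\}$, and also on $u_n$ when $n\equiv 1 \pmod 3$, is an IDF of weight $\lceil n/3\rceil$ for all $n\ge t+3$. You were right to doubt the inequality $\Upsilon_I(T_n\langle 1,2,t\rangle)\ge\lceil n/3\rceil$; no discharging or window-matching argument can prove it. Take $t=4$ and $n=13\ge t+3$. Put $f=1$ on $u_2,u_5,u_9,u_{12}$ and $f=0$ elsewhere. In $T_{13}\langle 1,2,4\rangle$ the labelled neighbours are:
\begin{itemize}
\item $u_2,u_5$ for each of $u_1,u_3,u_4,u_6$;
\item $u_5,u_9$ for $u_7$;
\item $u_9,u_{12}$ for each of $u_8,u_{10},u_{11},u_{13}$.
\end{itemize}
For instance, $N(u_1)=\{u_2,u_3,u_5\}$ and $N(u_{13})=\{u_9,u_{11},u_{12}\}$. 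Hence $\Upsilon_I(T_{13}\langle 1,2,4\rangle)\le 4<5=\lceil 13/3\rceil$.

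This is not a boundary accident; periodic labellings of density strictly below $1/3$ exist.
\begin{itemize}
\item For $t\equiv\pm 3 \pmod 7$ (e.g.\ $t=4,10$), the six offsets $\pm1,\pm2,\pm t$ are exactly the nonzero residues mod $7$. Labelling two residue classes mod $7$ therefore gives every interior $0$-vertex exactly two labelled neighbours, for weight $2n/7+O(1)$.
\item Your proposed density-$1/4$ test succeeds at $t=7$. Since $7\equiv -1 \pmod 4$, label all multiples of $4$. A vertex $x\equiv 1,2,3 \pmod 4$ then has labelled neighbours $\{x-1,x+7\}$, $\{x-2,x+2\}$, $\{x+1,x-7\}$ respectively. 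So $\Upsilon_I(T_n\langle 1,2,7\rangle)\le n/4+O(1)$.
\item The same holds for every odd $t$, so your counting bound $n\le 4w$ is essentially tight there.
\end{itemize}
In $\mathbb{Z}_8$ the $t=7$ pattern is $S=\{0,4\}$. It passes your test only if neighbours are counted with multiplicity, because $x-1$ and $x+7$ coincide mod $8$ but are distinct vertices of $T_n$.

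The paper's proof does not supply a lower bound either. It only exhibits labellings, and its ``converse'' argues about $t\not\equiv 1 \pmod 3$, which lies outside the hypothesis. It even displays a $4$-element set at $n=13$ while asserting the value $5$. As written the statement is false; only the upper bound $\lceil n/3\rceil$ survives. For the values of $t$ above, a corrected statement must give something strictly smaller for large $n$.
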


\begin{proof}

  We proceed by induction, By using minimum value of $t = 4$, which implies result hold for all values of $n \geq 7$. Let $u_{1}, u_{2}, u_{3} \cdots, u_{n-1}, u_{n}$ be the nodes for initial value of $t$. When $n \equiv 1, 2$ (mod $t$) then $u_{2}, u_{5} \cdots, u_{j}, \cdots, u_{n-2}, u_{n}$ = $1$ and if $n \equiv 0, 3$ (mod $t$) then $f(u_{1})$, $f(u_{4})$, $\cdots$, $f(u_{j}), \cdots$, $f(u_{n})$ are labelled $1$, where $j$ is appropriate node between these values. By assigning $0$ to all other vertices, result is obtained.\newline
  Let $t = 7$ which is congruent to $1$ mod $3$ gives $n \geq 10$. When $n \equiv 0, 3, 6$ (mod $t$) then $f(u_{1})$, $f(u_{4})$, $\cdots$, $f(u_{n-3})$, $f(u_{n})$ are labelled $1$ and if  $n \equiv 1, 2, 4, 5$ (mod $t$) then $f(u_{2})$, $f(u_{5})$, $f(u_{8})$, $\cdots$, $f(u_{n-4})$, $f(u_{n-1})$ are labelled $1$ and all other vertices labelled $0$. Let us suppose, the result is true for  all finitely many values $t$. The induction hypothesis obtain by proceeding in the similar way. If $n \equiv 0, 3, \cdots, (n-1)$ (mod $t$) then $f(u_{1})$, $f(u_{4})$, $\cdots$, $f(u_{j})$, $\cdots$, $f(u_{n})$ are labelled $1$ and other nodes labelled $0$ and when $n \not\equiv 0, 3, \cdots, (n-1)$ (mod $t$) then $f(u_{2})$, $f(u_{5})$, $f(u_{8})$, $\cdots$, $f(u_{j})$, $\cdots$, $f(u_{n-4})$, $f(u_{n})$ are labelled $1$, where $j$ is appropriate vertex and remaining nodes labelled $0$. Consequently, the result is prevailed for all values.\newline
 \textbf{ Conversely}, suppose that $t \neq 3s+1$ for $s \geq 1$. Then there are two possibilities: either  $t > 3s+1$ or $t < 3s+1$. \newline
  {Case $1$:} Let  $t > 3s+1$ for $s \geq 1$. Then deg$(u_{1})$ = deg$(u_{n})$ = $3$ and all other nodes have degree $\geq 3$. Clearly, for the minimum or maximum value of $t$ the Italian domination set is $\{u_{1}, u_{4} \cdots, u_{j}, \cdots, u_{n}\}$ where $j$ is any appropriate middle value belong to the set. So, $\Upsilon_{I}$ must be greater than or equal to $\Big \lceil \dfrac{n}{3} \Big \rceil $.\newline
  {Case $2$:}  Let  $t < 3s+1$ for $s \geq 1$. Then deg$(u_{1})$ = deg$(u_{n})$ = $3$ and assign $1$ to both of these vertices. Let's take $n = 13$ then Italian domination number is $5$ but $\{u_{1}, u_{5}, u_{9}, u_{13}\}$ be the Italian domination set. Clearly, $\Upsilon_{I}$ must be less than or equal to $\Big \lceil \dfrac{n}{3} \Big \rceil $.\newline
  Consequently, $\Upsilon_I (T_{n}<1,2,t>) = \Big \lceil \dfrac{n}{3} \Big \rceil $.

\end{proof}

\begin{theorem}
Let for all $ n \geq t+3$ and for any $ t \equiv 2$ (mod $3$), then

\[\Upsilon_I (T_{n}<1,2,t>) =\begin{cases*}
\Big \lceil \dfrac{n}{3}  \Big \rceil + 1 & when $n = t+3+3k, k\geq 0$,\\
\Big \lfloor \dfrac{n}{3} \Big \rfloor + 1 & when $n \geq t + 3$.

\end{cases*} \]

\end{theorem}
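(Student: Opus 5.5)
The plan is to prove a matching upper and lower bound for each residue class of $n$ modulo $3$. Label the nodes $u_{1},\dots,u_{n}$, so that $u_{i}u_{j}$ is an edge exactly when $|i-j|\in\{1,2,t\}$, with $t\equiv 2 \pmod 3$. The two cases of the statement overlap as written. I will read them as follows: the first formula applies when $n=t+3+3k$, that is, when $n\equiv t \equiv 2 \pmod 3$; the second applies to every other $n\geq t+3$. Part of the work is to confirm that this reading is the one that matches the true values for small $t$. I would check this by hand for $t=5$ and $n=8,\dots,14$, in the way Lemma 1 handles $T_{n}<1,2>$.

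\textbf{Upper bound.} The construction starts from the underlying square of the path, $T_{n}<1,2>$. On that graph, labelling every third node $u_{r},u_{r+3},u_{r+6},\dots$ with $1$ is already an Italian dominating function away from the ends. Each node strictly between two consecutive labelled nodes is at distance $1$ from one of them and distance $2$ from the other, so it sees weight $2$.

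At the two ends, a node with only one labelled neighbour in the path-square must be covered some other way. There are two options: the chord of length $t$ supplies the second unit, or one extra unit of weight is placed near that end.

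The hypothesis $t\equiv 2\pmod 3$ matters at exactly this point. If $u_{r}$ is labelled, then $u_{r+t}$ lies in the residue class $r+2$. So the chords connect labelled nodes to nodes that are \emph{not} labelled, and the $t$-chords can be used to repair a deficient endpoint. For each class $n\equiv 0,1,2 \pmod 3$ I will pick the offset $r$, and possibly one extra labelled node such as $u_{n}$ or $u_{n-1}$. The aim is total weight $\lfloor n/3\rfloor+1$, or $\lceil n/3\rceil+1$ when $n\equiv 2\pmod 3$. I will then verify the end conditions explicitly, using the chords $u_{i}u_{i+t}$ for $i\leq 2$ and $i\geq n-t-1$.

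\textbf{Lower bound.} This is the main obstacle. A naive counting argument is too weak: each unit of weight can serve up to $6$ neighbours, so counting alone gives only about $n/4$. I would instead argue locally with a discharging scheme. Each node $v$ with $f(v)=0$ needs to receive weight $2$ from $N(v)$. A labelled node $u_{j}$ sends charge to its path-neighbours $u_{j\pm1}$ and $u_{j\pm2}$, and only through a chord when that is unavoidable. The goal is to show that every maximal run of consecutive $0$-labelled nodes has length at most $2$, except where a chord compensates.

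Every compensating chord spends weight that is then unavailable to the other side. Using $t\equiv 2\pmod 3$, I would show that such compensations cannot shift the phase of the labelling by a full period. From this I would derive that every window $u_{i},u_{i+1},u_{i+2}$ carries weight at least $1$, apart from a bounded number of defects near the ends.

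Finally, the ends $u_{1}$ and $u_{n}$ each have degree $3$. Because of this, either $f(u_{1})+f(u_{2})\geq 1$ with an extra unit required, or $u_{1}$ is covered by its chord partner $u_{1+t}$. A case check on the first and last three nodes should then produce the additive $+1$. In the class $n\equiv 2\pmod 3$ it should force the stronger bound $\lceil n/3\rceil+1$, because there the two end-phases cannot both be satisfied by a single shared period.

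The step I expect to fail most easily is this last end-phase analysis. If it does not go through, my first fallback is a small computer-style exhaustive check for $t=5$ and $t=8$. That check would either confirm the boundary behaviour or show that the statement must be corrected.
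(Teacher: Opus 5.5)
There is a genuine gap that cannot be closed: the lower bound you are trying to prove is false, in any reading of the overlapping cases. First, your target values ($\lfloor n/3\rfloor+1$, or $\lceil n/3\rceil+1$ when $n\equiv 2\pmod 3$) are exactly $\lfloor (n-2)/3\rfloor+2$, the value Lemma 1 gives for $T_{n}<1,2>$. So the upper bound needs no chords at all, since an IDF of the spanning subgraph $T_{n}<1,2>$ is an IDF of $T_{n}<1,2,t>$. The lower bound then amounts to the claim that the $t$-chords never help. They do help. Take $t=5$ and $n=12$. Here $n\neq t+3+3k$, so only the second case applies, and it asserts $\Upsilon_I=5$. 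Put $f=1$ on $u_{3},u_{6},u_{7},u_{10}$ and $0$ elsewhere. Then:
\begin{itemize}
\item $u_{1}\sim u_{3},u_{6}$; $u_{2}\sim u_{3},u_{7}$; $u_{4}\sim u_{3},u_{6}$;
\item $u_{5}$ and $u_{8}$ are adjacent to all four labelled nodes;
\item $u_{9}\sim u_{7},u_{10}$; $u_{11}\sim u_{6},u_{10}$; $u_{12}\sim u_{7},u_{10}$.
\end{itemize}
Hence $\Upsilon_I(T_{12}<1,2,5>)\le 4$. The same labelling is an IDF for $n=11$, and $u_{3},u_{6},u_{7}$ is one for $n=8$. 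These give weights $4$ and $3$, where your reading of the first case demands $5$ and $4$.

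The effect is not confined to the ends, so your window claim also fails. That claim is that every three consecutive nodes carry weight at least $1$, up to boundedly many end defects. For $t=5$, label exactly the $u_{i}$ with $4\mid i$. An interior $0$-node $u_{i}$ then sees two labelled neighbours:
\begin{itemize}
\item $u_{i-1},u_{i-5}$ when $i\equiv 1\pmod 4$;
\item $u_{i-2},u_{i+2}$ when $i\equiv 2\pmod 4$;
\item $u_{i+1},u_{i+5}$ when $i\equiv 3\pmod 4$.
\end{itemize}
After an $O(1)$ repair near $u_{1}$ and $u_{n}$, this gives $\Upsilon_I(T_{n}<1,2,5>)\le n/4+O(1)$, with runs of three zeros along the whole path. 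The same interior pattern works for every $t\equiv 5\pmod{12}$. So the counting bound $2n/(\Delta+2)=n/4$, which you set aside as too weak, is essentially sharp here. The chords compensate everywhere, not just at the ends.

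Your proposed hand check for $t=5$, $n=8,\dots,14$ would expose this at once, and it should come first. The paper's proof gives you nothing to fall back on. It only exhibits labellings, and never argues a lower bound. In its first case the labelling $u_{1},u_{4},\dots,u_{n-2}$ is not even consistent, since $n-2\equiv 0\pmod 3$ while the listed indices are $\equiv 1\pmod 3$.
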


\begin{proof}
  We prove this theorem in $2$ steps.\newline
    \textbf{Step} \textbf{$1$}: Let any $t \equiv 2$ (mod $3$) for all $n = t+3+3k, k\geq 0$ and ${u_{1}, u_{2}, \cdots, u_{n-1}, u_{n}}$ be the nodes of graph. By varying value of $k$ the values of $n$ are $t+3, t+6, t+9 \cdots, n-2$. By applying Italian domination function and labelled $1$ to $f(u_{1})$, $f(u_{4})$, $f(u_{7})$, $\cdots$, $f(u_{n-2})$ and all other remaining nodes labelled $0$. Thus, by using this chain Italian domination number can be obtain and IDS = $\{u_{1}, u_{4}, u_{7}, \cdots, u_{n-2}\}$. \newline
   \textbf{Step} \textbf{$2$}: Let  $ n \geq t+3$ and here all values of $n$ that are in step $1$ are excluded. Any $ t \equiv 2$ (mod $3$), like $5, 8, 11, \cdots n-2$ have ${u_{1}, u_{2}, \cdots, u_{n-1}, u_{n}}$ number of nodes. By applying Italian domination function on other values of $n$ that are not equal to $ t+3+3k, k\geq 0$. The $f$ applied on nodes and $f(u_{1})$ = $f(u_{4})$ = $f(u_{7})$, $\cdots$, $f(u_{n})$ = $1$ and all remaining nodes labelled $0$. Consequently, IDN = $\Big \lfloor \dfrac{n}{3} \Big \rfloor + 1$ and IDS = $\{u_{1}, u_{4}, u_{7}, \cdots, u_{n}\}$.

\end{proof}

\begin{theorem}
Let for all $ n \geq t+3$ and for any $ t \equiv 0$ (mod $3$), then

\[\Upsilon_I (T_{n}<1,2,t>) =\begin{cases*}
 \Big \lceil \dfrac{n}{3} \Big \rceil & when $n < t+3$,\\
\Big \lfloor \dfrac{n}{3} \Big \rfloor + 1 & when $n \geq t + 3$.

\end{cases*} \]

\end{theorem}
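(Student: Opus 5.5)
We propose to prove the formula for $T_{n}<1,2,t>$ with $t\equiv 0 \pmod 3$ in the same two-step style as the preceding theorem for $t\equiv 2 \pmod 3$: a constructive upper bound followed by a counting lower bound. Label the nodes $u_{1},\dots,u_{n}$, so that $u_i$ is adjacent to $u_{i\pm1},u_{i\pm2},u_{i\pm t}$ whenever these indices exist. In particular $\deg(u_1)=\deg(u_n)=3$ once $n>t$.

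\textbf{Upper bound.} For $n\ge t+3$ we take the set $\{u_{1},u_{4},u_{7},\dots\}$, i.e. all $u_{3j+1}$ with $3j+1\le n$, and add $u_{n}$ if it is not already present. Every label is $1$ and all other nodes get $0$.

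Consider an interior node $u_{3j+2}$ or $u_{3j+3}$ with label $0$. Its distance-$1$ and distance-$2$ neighbours already contain two labelled nodes, namely $u_{3j+1}$ and $u_{3j+4}$, provided $u_{3j+4}$ exists. The only nodes that can fail are those after the last $u_{3j+1}$, and these are covered by the extra label on $u_{n}$. This gives weight $\lceil n/3\rceil$ when $n\equiv 1 \pmod 3$ and $\lfloor n/3\rfloor+1$ otherwise, which is $\lfloor n/3\rfloor+1$ in both cases.

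For the small range $n<t+3$ we argue as follows.
\begin{itemize}
\item If $n\le t$, the graph is the square of a path. There the pattern $u_{2},u_{5},\dots$ (or $u_1,u_4,\dots$ depending on the residue) gives $\lceil n/3\rceil$, as in Lemma 1 for $T_n<1,2>$.
\item For $n=t+1,t+2$, the $t$-edges $u_1u_{t+1}$ and $u_2u_{t+2}$ are available to dominate the end nodes. The point of using them is to save the extra vertex, and we check the two residues by hand.
\end{itemize}

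\textbf{Lower bound.} Let $f=(V_0,V_1,V_2)$ be an IDF. We plan a discharging argument. Each node of positive label sends its weight, equally split, to the zero-labelled nodes of its closed neighbourhood. Since $u_i$ has at most $4$ neighbours on the path-square part plus $2$ via $t$-edges, a crude bound like Theorem 1.2 only gives $2n/8$. That is too weak, so instead we partition the vertex sequence into consecutive blocks of three, $B_j=\{u_{3j+1},u_{3j+2},u_{3j+3}\}$.

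The key local claim is that every window of three consecutive nodes carries weight at least $1$, except possibly windows whose zero nodes are rescued through $t$-edges. The fact $t\equiv 0\pmod 3$ means a $t$-edge joins $u_i$ to $u_{i+t}$ in the \emph{same} position inside its block. We would exploit this to show that $t$-edges cannot create a periodic saving: a saving in one block forces positive labels in the same residue class, which keeps the total at least $\lceil n/3\rceil$. The end nodes $u_1$ and $u_n$ have only three neighbours each. Combined with this, that forces one extra unit once $n\ge t+3$, and it is what pushes the count from $\lceil n/3\rceil$ to $\lfloor n/3\rfloor+1$ when $3\mid n$ or $n\equiv 2$.

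\textbf{Main obstacle.} We expect the hardest step to be this lower bound, specifically ruling out labellings that use $t$-edges to leave two consecutive zeros in the interior, and handling labels of value $2$. We would first attempt an induction on $n$: deleting the last three nodes removes weight at least $1$, which we check by case analysis on $f(u_{n}),f(u_{n-1}),f(u_{n-2})$. The base cases $n=t+3,t+4,t+5$ would be verified directly. If the induction breaks because of $t$-edge interactions near the cut, we would fall back to a finite check of the residues modulo $3$ together with the block-position argument above.
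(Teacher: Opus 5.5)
\paragraph{The lower bound cannot be proved, because the statement is false.}
The lower bound is where your proposal breaks, and it cannot be repaired: the statement fails for $n\ge t+3$. Your key local claim is that $t\equiv 0 \pmod 3$ sends each node to the same position of another block, so $t$-edges cannot produce a periodic saving. That reasoning is irrelevant, because an optimal labelling need not be aligned with the blocks $B_j$ at all.

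Take $t=6$ and $n=18\ge t+3$. Put $f=1$ on $u_1,u_4,u_8,u_{11},u_{15},u_{18}$ (the indices $\equiv 1,4 \pmod 7$) and $0$ elsewhere. Every zero node then has two labelled neighbours:
\begin{itemize}
\item $u_2$ sees $u_1,u_4,u_8$; $u_3$ sees $u_1,u_4$; $u_5$ sees $u_4,u_{11}$; $u_6$ sees $u_4,u_8$;
\item $u_7$ sees $u_8,u_1$; $u_9$ sees $u_8,u_{11},u_{15}$; $u_{10}$ sees $u_8,u_{11},u_4$; $u_{12}$ sees $u_{11},u_{18}$;
\item $u_{13}$ sees $u_{11},u_{15}$; $u_{14}$ sees $u_{15},u_8$; $u_{16}$ sees $u_{15},u_{18}$; $u_{17}$ sees $u_{15},u_{18},u_{11}$.
\end{itemize}
Hence $\Upsilon_I(T_{18}\langle 1,2,6\rangle)\le 6<7=\lfloor 18/3\rfloor+1$, even though the windows $\{u_5,u_6,u_7\}$ and $\{u_{12},u_{13},u_{14}\}$ carry weight $0$. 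The same residue pattern mod $7$ is an IDF for every $n\equiv 4 \pmod 7$, with weight $(2n+6)/7$, so the discrepancy grows linearly in $n$. For odd $t$ such as $t=9$, labelling every fourth node already works away from the ends.

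The case $t=3$, which the hypothesis also allows, fails at once. In $T_6\langle 1,2,3\rangle$ the node $u_3$ is adjacent to all other nodes, so $\Upsilon_I=2<3$. In $T_9\langle 1,2,3\rangle$ the set $\{u_1,u_5,u_9\}$ gives $3<4$, in line with the paper's own Lemma 2.

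Your fallback induction has the same defect. Restricting an IDF of $T_n$ to $u_1,\dots,u_{n-3}$ need not give an IDF of $T_{n-3}$, since nodes near the cut lose $t$-neighbours; above, $u_{12}$ loses $u_{18}$.

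\paragraph{Slips in the constructive half, and what survives.}
For $n\equiv 2 \pmod 3$ your set $\{u_{3j+1}\}\cup\{u_n\}$ has $\lceil n/3\rceil+1=\lfloor n/3\rfloor+2$ elements; for $n=11$, $t=6$ it is $\{u_1,u_4,u_7,u_{10},u_{11}\}$. Reaching $\lfloor n/3\rfloor+1$ requires using $t$-edges at the ends, e.g.\ $\{u_1,u_4,u_8,u_{11}\}$ or $\{u_2,u_5,u_7,u_{10}\}$ for $n=11$.

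Your remark that the square of a path has Italian domination number $\lceil n/3\rceil$ is also wrong: $P_6^2$ needs $3$ and $P_8^2$ needs $4$, as in Lemma 1.

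The paper's own proof gives explicit labellings plus a degree-based ``converse'', and it does not supply a valid lower bound either, so there is no correct version of your missing step to recover. What can actually be established is the range $n\in\{t+1,t+2\}$. There $\Delta\le 4$, so the counting bound $\Upsilon_I(G)\ge 2n/(\Delta+2)$ (Theorem 1.2 with the inequality reversed) gives $\lceil n/3\rceil$. This is attained by $\{u_1,u_4,\dots,u_{t+1}\}$ and by $\{u_2,u_5,\dots,u_{t-1},u_{t+1}\}$ respectively. For $n\ge t+3$ only upper bounds survive, and the claimed equality is false.
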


\begin{proof}
  There are two possible cases.\newline
  \textbf{Case $1$}: Let  $n < t+3$. The minimum value of $t$ = $6 \equiv 0 (mod$ $3)$ and ${u_{1}, u_{2}, \cdots, u_{8}}$ be the nodes of graph. The chain of nodes  $f(u_{2})$, $f(u_{5})$, $f(u_{7})$ = $1$ and all other nodes labelled by $0$. Thus, Italian domination number is $3$. Let proceed it generally, then $f(u_{2})$, $f(u_{5})$, $\cdots$, $f(u_{i})$, $\cdots$, $f(u_{n-1})$ = $1$ and remaining nodes labelled $0$, where $i$ is any appropriate node.\newline
   \textbf{Case $2$}: Let  $n \geq t+3$ and ${u_{1}, u_{2}, \cdots, u_{n-1}, u_{n}}$ be the nodes of graph. Then the nodes,  $f(u_{1})$, $f(u_{5})$, $f(u_{8})$, $f(u_{10})$ $\cdots$, $f(u_{i})$, $f(u_{j})$, $\cdots$, $f(u_{n})$ are labelled $1$. When $ n \equiv 0$ (mod $3$), then $f(u_{1})$, $f(u_{5})$, $\cdots$, $f(u_{i})$, $f(u_{j})$, $\cdots$, $f(u_{n})$ are labelled $1$ for $i$ and $j$ are any appropriate nodes, where the remaining nodes labelled $0$. \newline
\textbf{{Conversely,}} Suppose that $t \neq 3s+3$ for $s\geq1$. Let, for the initial case $u_{i}$, $i \in Z^{+}$ be the nodes of graph till $t+2$. Since, the Italian domination number is  $\Big \lceil \dfrac{n}{3} \Big \rceil$ and if we assign $f(u_{2})$ = $f(u_{5})$ = $1$ and otherwise $0$ for the first possible value of $t$, then  $\Big \lceil \dfrac{7}{3} \Big \rceil$ or  $\Big \lceil \dfrac{8}{3} \Big \rceil$ = $3$. But Italian dominating set contain only two nodes labelled by $1$. Thus, we need at least one more node of degree $4$ in the set to fulfill the criteria. In addition to the previous case, let $u_{i}$, $i \in Z^{+}$ be the vertices of graph for $n \geq t+3$. Let $u_{1}$ and $u_{n}$ be the nodes of minimum degree where deg$(u_{2})$ = deg$(u_{n-1})$ = $4$ and deg$(u_{3})$ = deg$(u_{n-2})$ = $5$. The degree of all nodes are greater equal to $4$. Here, assign $1$ to a node of degree $3$ and all other labelled nodes are of degree greater equal to $4$. The labelled $1$ nodes are  $f(u_{1})$, $f(u_{5})$, $\cdots$, $f(u_{i})$, $\cdots$, $f(u_{n-1})$. Clearly, there exist a node $f(u_{j})$ of maximum degree for which $f(u_{j})$ = $0$ and all neighbors of $f(u_{j})$ are also zero. The Italian domination function cannot obtained. Thus, we need $f(u_{j})$ = $1$ for getting desired function.

\end{proof}

\begin{lemma}
  Let for any $t_{k}$, $k = 1,2,3, \cdots, m$ and for all $ n $, then $\Upsilon_I (T_{n}<t_{1},t_{2},t_{3}, \cdots, t_{m}, t_{m}+2>) = \Big \lfloor \dfrac{n+t_{m}-2}{\mid t_{k}\mid} \Big \rfloor + 1 $.
\end{lemma}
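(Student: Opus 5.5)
I would first fix an interpretation, because the statement is loose. I read the generator set as consecutive generators $t_k=k$ for $k=1,\dots,m$, so $t_m=m$, together with the extra generator $t_m+2$. I read $\mid t_k\mid$ as the common spacing that the labelling below achieves; I expect this to be $t_m+1$, in line with the constant $t_i+1$ in the general theorem above. The cases $m=2,3$ should then be compared with Lemma~1, Lemma~2 and the theorems on $T_n<1,2,t>$, as a sanity check of the formula on small orders. If the formula fails there, that shows the lemma needs a restriction on $n$, such as $n\ge 2t_m+4$ as in the general theorem.

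\emph{Upper bound.} I would give an explicit Italian dominating function using only labels $0$ and $1$. Put $f(u_1)=f(u_n)=1$, and then $f(u_j)=1$ for $j\equiv 1 \pmod{\mid t_k\mid}$ up to $n$. Every interior vertex labelled $0$ must then see two labelled vertices in its closed neighbourhood, and the check splits according to which generator is used. A vertex $u_i$ with $f(u_i)=0$ has a nearest labelled vertex on its left within distance $\le t_m$. It also has one on its right, either within distance $\le t_m$ or at distance exactly $t_m+2$, which is where the extra generator is used. The near end $u_1,\dots,u_{t_m}$ and the far end $u_{n-t_m+1},\dots,u_n$ are where the added $+t_m-2$ in the numerator appears. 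Near each end a vertex may have only one labelled neighbour on one side, and forcing both $u_1$ and $u_n$ into the support accounts for the additive constant $+1$. Counting the labelled vertices should give exactly $\lfloor (n+t_m-2)/\mid t_k\mid\rfloor+1$.

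\emph{Lower bound.} I would use a discharging or counting argument in the spirit of Theorem~1.2. Let $V_0,V_1,V_2$ be the label classes. Each vertex of $V_0$ needs total weight at least $2$ from its neighbours, so
$$2|V_0|\le \sum_{v\in V_1\cup V_2} f(v)\,|N(v)\cap V_0|.$$
Each labelled vertex has degree at most $2t_m+2$ (its neighbours at distances $1,\dots,t_m$ and $t_m+2$ on both sides), so this alone gives only $\Upsilon_I\ge 2n/(\Delta+2)$, which is too weak. The sharper bound has to come from the structure of the graph. I would cut the path $u_1,\dots,u_n$ into consecutive windows of length $\mid t_k\mid$ and prove that every window must contain positive weight. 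If a window contained none, some vertex in its middle could only receive weight from vertices at distance exactly $t_m+2$ on each side, and I would aim to show that this forces a contradiction or a compensating weight elsewhere. The end vertices have smaller degree, $t_m+1$, and I would argue separately that they force one extra unit of weight.

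\emph{Main obstacle.} The lower bound is where I expect the difficulty. In particular, the long edges of length $t_m+2$ may allow a $0$-vertex to be dominated across an empty window. If that happens, the true value could be smaller than the formula for some residues of $n$, and the proof would reduce to checking small cases by exhaustion. The first thing I would try is to verify the formula by hand for $m=2$, that is $T_n<1,2,4>$. Since $4\equiv 1\pmod 3$, the theorem for $T_n<1,2,t>$ with $t\equiv 1 \pmod 3$ above predicts $\lceil n/3\rceil$, and I would compare this with $\lfloor n/3\rfloor+1$. These agree except when $3\mid n$, which already suggests the statement needs the restriction on $n$ mentioned in the first paragraph.
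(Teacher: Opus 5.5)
\textbf{The statement is false, so the lower bound you plan cannot be proved.} Your $m=2$ check caught a symptom. The remedy you suggest, restricting to large $n$ such as $n\ge 2t_m+4$, does not work, because the failure fills a whole residue class.

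\emph{A general counterexample.} Write $s=t_m+1$ and take $n\equiv 3\pmod{s}$ with $n\ge s+3$. Give label $1$ exactly to the $u_j$ with $j\equiv 2\pmod{s}$, $2\le j\le n-1$, and label $0$ to the rest.
\begin{enumerate}
\item[(a)] Every $0$-vertex $u_v$ with $2<v<n$ lies strictly between two consecutive labelled vertices that are $s$ apart. So it is within distance $t_m$ of both.
\item[(b)] The vertex $u_1$ sees $u_2$, and through the generator $t_m+2$ it also sees $u_{t_m+3}$.
\item[(c)] Symmetrically, $u_n$ sees $u_{n-1}$ and $u_{n-t_m-2}$.
\end{enumerate}
This Italian dominating function has weight $(n-3)/s+1$. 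The claimed value is $\lfloor (n+t_m-2)/s\rfloor+1=\lceil (n-2)/s\rceil+1=(n-3)/s+2$, which is one more.

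\emph{Concrete instances.}
\begin{enumerate}
\item[(a)] $T_{7}<1,2,3,5>$ has the universal vertex $u_4$, so $\Upsilon_I=2$, not $3$. The paper's own proof notes that $u_4$ with label $2$ works at $n=t+2$.
\item[(b)] $T_{11}<1,2,3,5>$ is dominated by $u_2,u_6,u_{10}$.
\item[(c)] $T_{9}<1,2,4>$ is dominated by $u_2,u_5,u_8$.
\end{enumerate}
Instances (b) and (c) both satisfy $n\ge 2t_m+4$, and each has weight $3$ where the formula gives $4$.

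\emph{Your lower-bound ingredients fail.} The same example shows the ends need not cost an extra unit of weight. Your window claim is also false: a block of $t_m+1$ consecutive vertices can carry no weight. In $T_n<1,2,3,5>$, labels $1$ on $u_{v-1},u_{v+4},u_{v+5}$ dominate the empty block $u_v,\dots,u_{v+3}$. For example, $u_v$ sees $u_{v-1}$ and $u_{v+5}$.

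\textbf{Your construction differs from the paper's, and is weaker.} Your reading of $|t_k|$ as $t_m+1$ (the number of generators, with $t_i=i$) matches the paper's. The paper's proof consists solely of the labelling $u_2,u_{t_m+3},u_{2t_m+4},\dots,u_n$, that is, all $j\equiv 2\pmod{t_m+1}$ together with $u_n$. It gives no minimality argument at all, and the stated formula is exactly the size of this set.

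You instead start at $u_1$ and always include $u_n$. That uses one vertex too many whenever $n\equiv 2\pmod{t_m+1}$. For $T_{10}<1,2,3,5>$ you get $\{u_1,u_5,u_9,u_{10}\}$, while $\{u_2,u_6,u_{10}\}$ suffices. So your claim that the count ``should give exactly'' the formula is false.

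The missing idea is to let an end vertex be dominated through the edge of length $t_m+2$. Despite your description, your labelling never actually uses that generator. Using it at the left end only reproduces the paper's count. Using it at both ends produces the counterexamples above.

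The right conclusion from your sanity check is therefore that the lemma must at least be corrected on the class $n\equiv 3\pmod{t_m+1}$. It is not that the lemma holds once $n$ is large.
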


\begin{proof}
  Firstly, we consider a basic case $T_{n}<1,2,3,5>$. Let $u_{1}, u_{2}, u_{3}, \cdots, u_{n}$ be the nodes of graph and $t_{k}$ is the total cardinality of generators. Define a function $f$ on the sequence of following nodes: $u_{1}, u_{2}, u_{3}, \cdots, u_{n}$. Assign $1$ to these nodes and consequently f$(u_{2})$ + f$(u_{6})$ + f$(u_{10})$ + $\cdots$ + f$(u_{n})$ = $ \Big \lfloor \dfrac{n+t_{m}-2}{\mid t_{k}\mid} \Big \rfloor + 1 $. Here, for $n = t+2$, Italian dominating set is $\{4\}$ with labelled $2$.\newline
  Moreover, let $u_{1}, u_{2}, u_{3}, \cdots, u_{n}$ be the nodes of graph $ T_{n}<t_{1},t_{2},t_{3}, \cdots, t_{m}, t_{m}+2> $ for any $t_{k} \geq 5$. Since, deg$(u_{1})$ = deg$(u_{n})$ = $\mid t_{k}\mid$ and deg$(u_{2})$ must be equal to deg$(u_{n-1})$ = $\mid t_{k}\mid + 1$. So, there exist a node of maximum degree at the position of $  \Big \lceil \dfrac{u_{n}}{2} \Big \rceil $. Therefore, under a function $f$ labelled $1$ to the following nodes: f$(u_{2})$, f$(u_{t_{m}+3})$, f$(u_{2t_{m}+4})$, f$(u_{3t_{m}+5})$, $\cdots$, f$(u_{n})$ = $1$. Consequently IDN = $ \Big \lfloor \dfrac{n+t_{m}-2}{\mid t_{k}\mid} \Big \rfloor + 1 $ and IDS = $\{u_{2}, u_{t_{m}+3}, u_{2t_{m}+4}, u_{3t_{m}+5}, \cdots, u_{n}\}$. Furthermore, in this case there are some nodes at the position $u_{t_{m}+4}$ to $u_{t_{k}}$ for which IDN = $\Big \lfloor \dfrac{n+t_{m}-2}{\mid t_{k}\mid} \Big \rfloor$ and can be obtained by using similar function.

\end{proof}

\begin{prop}
   Let for any $t > 5$ and for all $ n $, when $t \equiv 1 (mod $ $4)$ then $\Upsilon_I (T_{n}<1,2,3,t>) = \Big \lceil \dfrac{n+1}{4} \Big \rceil $
\end{prop}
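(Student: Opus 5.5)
The plan is to prove matching upper and lower bounds on $\Upsilon_I(T_n<1,2,3,t>)$. Before anything else I would check the range of $n$. For $n\le t$ the generator $t$ contributes no edges and the graph is just $T_n<1,2,3>$, which is governed by Lemma $2$. So I would treat $n>t$ as the substantive range, and reconcile the small cases with Lemma $2$ separately. The key structural fact is that $u_i\sim u_j$ iff $|i-j|\in\{1,2,3,t\}$, with $t\equiv 1 \pmod 4$ and $t\ge 9$.

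\textbf{Upper bound (construction).} I would label by $1$ every vertex $u_i$ with $i\equiv 2\pmod 4$, plus possibly one extra vertex at the right end, and label everything else $0$. Between two consecutive labelled vertices $u_{4j+2}$ and $u_{4j+6}$, each of $u_{4j+3},u_{4j+4},u_{4j+5}$ is within path-distance $3$ of both, so it sees weight $2$. The only problematic vertex at the left end is $u_1$, which sees only $u_2$ along the path. Here the hypothesis is used: $1+t\equiv 2\pmod 4$, so $u_{1+t}$ is labelled and gives $u_1$ its second unit. This is exactly why the congruence $t\equiv 1 \pmod 4$ appears. At the right end I would run through the four residues of $n \bmod 4$. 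When the last one or two vertices see only one labelled vertex, I would look for a labelled $u_{i}$ with $i=n-t$ or $i=n-1-t$ (again using $t\equiv 1 \pmod 4$ to align residues); otherwise I would add a single label at $u_n$. The resulting count should be $\lceil (n+1)/4\rceil$, which I would verify residue by residue.

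\textbf{Lower bound.} Let $f$ be an IDF and $V_0,V_1,V_2$ its level sets. I would cut $u_1,\dots,u_n$ into consecutive blocks of $4$, with the first block absorbing the boundary so that the count yields $\lceil (n+1)/4\rceil$. The aim is to show that each block carries weight at least $1$, and that the two end blocks together carry an extra unit. A block of four consecutive zeros $u_i,\dots,u_{i+3}$ forces, for the middle vertices, weight $2$ coming from $u_{i-3},\dots,u_{i-1}$, $u_{i+4},\dots,u_{i+6}$ and the $t$-jump neighbours. I would then discharge this surplus onto the empty block so that each block ends up with charge at least $1$. For the ends, $u_1$ (and $u_n$) has only four neighbours, $u_2,u_3,u_4,u_{1+t}$. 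If $f(u_1)=0$, these must carry weight $2$, and I would charge the excess to the first block to obtain the extra unit.

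\textbf{Main obstacle.} I expect the hardest step to be the discharging: making sure the weight supplied through the long edges $u_i u_{i\pm t}$ is not counted twice. A single labelled vertex can help both a path-neighbour block and a far block $t$ positions away. I would first try a pure counting bound, $\sum_{v\in V_0} f(N(v))\ge 2|V_0|$ together with $\deg\le 8$. This only gives roughly $n/5$, so it must be sharpened by restricting to the path edges $\{1,2,3\}$, in the spirit of Lemma $2$. Where the long edges genuinely matter, namely near the ends, I would handle them by a direct finite case analysis on $f(u_1),\dots,f(u_4)$ and $f(u_{n-3}),\dots,f(u_n)$.
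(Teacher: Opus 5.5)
There is a genuine gap in your lower bound, and it cannot be closed because the lower bound is false. Your upper bound is fine for $n>t$. Label the $u_i$ with $i\equiv 2\pmod 4$, use $u_{1+t}$ for $u_1$ and $u_{n-t}$, $u_{n-1-t}$ at the right end, and add $u_n$ when $n\equiv 0,1\pmod 4$. This gives an IDF of weight exactly $\lceil (n+1)/4\rceil$, and it is the same labelling the paper uses.

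Your idea of sharpening the $n/5$ count ``by restricting to the path edges'' runs the wrong way. Deleting edges can only increase $\Upsilon_I$, so nothing proved for $T_n<1,2,3>$ bounds $T_n<1,2,3,t>$ from below. Moreover, the long edges help throughout the interior, not only near the ends.

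Concretely, take $t=9$ and put $f(u_i)=1$ iff $5\mid i$. A zero vertex $u_i$ then sees:
\begin{itemize}
\item $u_{i-1}$ and $u_{i+9}$ if $i\equiv 1\pmod 5$;
\item $u_{i-2}$ and $u_{i+3}$ if $i\equiv 2$;
\item $u_{i-3}$ and $u_{i+2}$ if $i\equiv 3$;
\item $u_{i+1}$ and $u_{i-9}$ if $i\equiv 4$.
\end{itemize}
So every vertex away from the ends receives weight $2$. Yet $u_{5k+1},\dots,u_{5k+4}$ are four consecutive zeros, which directly contradicts your claim that every block of four carries weight at least $1$. The only deficient vertices are $u_1,u_2,u_3,u_4,u_9$ and, when $5\mid n$, $u_{n-4}$. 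Hence for $n=60$ the function equal to $1$ on $\{2,12,59\}\cup\{5,10,\dots,60\}$ and $0$ elsewhere is an IDF of weight $15<16=\lceil 61/4\rceil$. In general $\Upsilon_I(T_n<1,2,3,9>)\le n/5+O(1)$, and the same pattern (with the long neighbour's direction swapped when $t\equiv 1\pmod 5$) works for every $t\equiv\pm1\pmod 5$, e.g.\ $t=21,29,41$.

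Your reconciliation of $n\le t$ with Lemma 2 would also fail, if such $n$ are admitted. The graph $T_{10}<1,2,3,13>$ is just $T_{10}<1,2,3>$, and it has $\Upsilon_I=4$, whereas $\lceil 11/4\rceil=3$. Weight $3$ is impossible there: a label $2$ covers at most $7$ vertices, and three $1$'s cannot simultaneously serve $u_1$, $u_{10}$ and the middle vertices $u_5,u_6$.

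For comparison, the paper's own proof consists only of the $i\equiv 2\pmod 4$ construction and never argues a lower bound. What either route actually establishes is $\Upsilon_I(T_n<1,2,3,t>)\le\lceil (n+1)/4\rceil$ for $n>t$. For $t\equiv\pm1\pmod 5$ this inequality is strict for all sufficiently large $n$.
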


\begin{proof}
  Consider $t = 9$, which is congruent to $1$ mod $4$. Let $u_{1}, u_{2}, u_{3}, \cdots, u_{n}$ be the nodes of graph. When $n = t+1$ then $u_{1}$ is adjacent to $u_{2}, u_{3}, u_{4}$ and $u_{10}$ and the node $u_{2}$ is adjacent to $u_{3}, u_{4}, u_{5}$ and $u_{3}$ is adjacent to $u_{4}, u_{5}, u_{6}$ and so on. Thus the node $u_{9}$ is adjacent to $u_{10}$. Apply Italian dominating function and labelled $1$ to the following nodes:  f$(u_{2})$, f$(u_{6})$, f$(u_{10})$ = $1$. Therefore Italian domination number is $3$ and set is  $\{u_{2}, u_{6}, u_{10}\}$.\newline
   By the induction hypothesis, there exists a Italian dominating function $f$ on $u_{2}, u_{6}, u_{10}, \cdots u_{n-5}, u_{n-1}$ of $G$ such that $f$ assign a positive value $1$ to every node congruent to $2$ (mod $4)$.  Moreover, for any value of $t<n$ which is congruent to $1$ mod $4$, all nodes are connected in the similar way for all possible values of $n$. Assign $0$ to the nodes that congruent to $0,1,3$ (mod $4)$ and always allocate $1$, $u_{2}$ and to the nodes that are congruent to $2$ (mod $4)$. Then Italian domination function can be obtained. Clearly, $f$ is a IDF of $G$ such that IDN can be obtained. Therefore, IDS = $\{u_{2}, u_{6}, u_{10}, \cdots u_{n-5}, u_{n-1}\}$.
\end{proof}

\begin{theorem}
   Let for any $t_{j}$, $j = 1,2,3, \cdots, k$ and for all $ n $, when $t_{k} \equiv 1 (mod $ $c_{i})$, $i\geq 5$ then $\Upsilon_I (T_{n}<t_{1},t_{2},t_{3}, \cdots, t_{m}, \cdots, t_{k}>) = \Big \lceil \dfrac{n+c_{i}-2}{\mid c_{i}\mid} \Big \rceil $
\end{theorem}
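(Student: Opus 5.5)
We read the statement in the same way as the Proposition just proved. The generators are the consecutive values $t_1=1,t_2=2,\dots,t_{k-1}=c_i-1$, the last generator satisfies $t_k\equiv 1 \pmod{c_i}$, and $c_i\geq 5$ plays the role that $4$ played for $T_n<1,2,3,t>$. So the graph is the $(c_i-1)$-th power of the path $u_1u_2\cdots u_n$, together with the long edges $u_ju_{j+t_k}$. We want
\[
\Upsilon_I = \Big\lceil \frac{n+c_i-2}{c_i}\Big\rceil .
\]
Write $c=c_i$. We prove an upper bound by an explicit Italian dominating function and a matching lower bound by a block-counting argument. The first step is to check the intended normalisation on small cases ($n=t_k+1$, $c=5$), since for $c=4$ the Proposition gives $\lceil (n+1)/4\rceil$, which differs from $\lceil (n+2)/4\rceil$ when $n\equiv 2 \pmod 4$.

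\textbf{Upper bound.} Following the pattern $u_2,u_6,u_{10},\dots$ of the Proposition, set $f(u_j)=1$ for every $j\equiv 2 \pmod c$ with $j\le n$. Also set $f(u_n)=1$ when the last labelled index is smaller than $n-c+2$. All other vertices get $0$. The verification uses only the path-power edges.
\begin{itemize}
\item Every unlabelled vertex $u_j$ with $2<j<n$ lies strictly between two consecutive labelled vertices at distance $c$ apart. Hence it is within distance $c-1$ of both, and its neighbourhood has weight $2$.
\item The vertex $u_1$ sees $u_2$, and it sees $u_{1+t_k}$ through a long edge. Since $1+t_k\equiv 2 \pmod c$, that vertex is labelled.
\item Near $u_n$, the extra label on $u_n$ handles the tail.
\end{itemize}
Counting the labelled indices $2,2+c,2+2c,\dots$ together with the possible extra $u_n$ gives exactly $\lceil (n+c-2)/c\rceil$. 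This is a routine residue check on $n \bmod c$.

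\textbf{Lower bound.} Let $f$ be any Italian dominating function. Partition $u_1,\dots,u_n$ into consecutive blocks $B_1,B_2,\dots$ of length $c$, where the first block is shifted so that it has length $c-2$. The claim to prove is that every block has positive weight, and that the first and last vertices force one extra unit of weight. If a whole block of $c$ consecutive vertices had weight $0$, consider its middle vertex. Its path-power neighbours are the $c-1$ vertices on each side, so its neighbourhood weight can come only from the two flanking regions or from long edges. The endpoint vertices $u_1,u_n$ have degree about $c$, and this forces weight at least $2$ within the first $c$ vertices unless $f(u_1)\ge 1$. The same holds at the other end. Summing over blocks gives $w(f)\ge \lceil (n+c-2)/c\rceil$.

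\textbf{Main obstacle.} The hard step is ruling out savings from the long edges $u_ju_{j+t_k}$: a zero vertex in an empty block could collect its weight $2$ across a jump of length $t_k$. To handle this, we would pair each block with the block shifted by $t_k$. Because $t_k\equiv 1 \pmod c$, a long edge always lands one position later within the corresponding residue class. We would then do a discharging argument: each vertex of positive weight sends charge to the zero vertices it helps, and we show that a weight-$1$ vertex can fully serve at most $c$ positions in total, counting path-power and long edges together. If this charge bound fails near the ends, we fall back to the induction on $n$ used in the Proposition. Deleting the last $c$ vertices lowers the bound by exactly one, and we argue that any optimal $f$ has weight at least $1$ on those $c$ vertices, via the low-degree vertex $u_n$.
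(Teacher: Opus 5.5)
\textbf{There is a genuine gap, and it cannot be closed: under your reading the statement is false, and your own labelling shows this.} Your reading is also the one used by the paper's figures $T_{13},T_{14},T_{15}\langle 1,2,3,4,11\rangle$.

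Write $n-2=qc+r$ with $0\le r<c$, and let $L=n-r$ be the last index $\equiv 2\pmod c$. Suppose $r=1$. Then $n\equiv 3\pmod c$, while $t_k+1\equiv 2\pmod c$, so $n\neq t_k+1$ and hence $n\ge t_k+2$. The vertex $u_n$ then sees $u_{n-1}=u_L$, and through the long edge it sees $u_{n-t_k}$, whose index is again $\equiv 2\pmod c$. So no extra label is needed, and your function has weight $q+1=\lceil (n+c-2)/c\rceil-1$.

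Concretely, on $T_{13}\langle 1,2,3,4,11\rangle$ take $f(u_2)=f(u_7)=f(u_{12})=1$ and $0$ elsewhere. This is Italian dominating: $u_1$ sees $u_2,u_{12}$; $u_{13}$ sees $u_{12},u_2$; every other unlabelled vertex is within distance $4$ of two labelled ones. Hence $\Upsilon_I\le 3<4=\lceil 16/5\rceil$.

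Small orders fail as well. In $T_8\langle 1,2,3,4,6\rangle$ and $T_9\langle 1,2,3,4,6\rangle$ a central vertex is adjacent to all others, so $\Upsilon_I=2$, while the formula gives $3$.

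So your ``routine residue check'' does not produce $\lceil (n+c-2)/c\rceil$. Your lower bound hinges on the ends forcing an extra unit of weight, and it is not merely hard but false. The long edges you flag as the main obstacle are exactly what let $u_1$, and $u_n$ when $n\equiv 3\pmod c$, reuse labels already placed. This works because $t_k\equiv 1\pmod c$ sends them onto the labelled class $2\pmod c$.

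Your sanity check against the Proposition also slipped. $\lceil (n+1)/4\rceil$ and $\lceil (n+2)/4\rceil$ differ when $n\equiv 3\pmod 4$, not $n\equiv 2$. That is precisely the class where the theorem's formula overcounts.

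\textbf{Your tail rule is also wrong in the other direction.} You add $u_n$ only when $r=c-1$. For $2\le r\le c-2$, the vertex $u_{L+2}$ has $u_L$ as its only labelled path-power neighbour. Its long-edge partner $u_{L+2-t_k}$ has index $\equiv 3\pmod c$, so $u_{L+2}$ receives weight $1$ and the function is not Italian dominating.

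Adding $u_n$ whenever $r\ge 2$ repairs this. The result is a valid Italian dominating function of weight $q+1$ for $r\le 1$ and $q+2$ for $r\ge 2$, that is, $\lceil (n+c-3)/c\rceil$. This is the natural extension of the Proposition's $\lceil (n+1)/4\rceil$, and it lies strictly below the claimed value when $n\equiv 3\pmod c$.

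The paper's own proof does not resolve this. With $t_m=c-1$ it uses essentially your labelling: indices $2,t_m+3,2t_m+4,\dots$, i.e.\ the class $2\pmod c$. It always appends $u_n$, which is where its extra unit comes from. It then disposes of the converse in one sentence, with no lower-bound argument.

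A correct theorem would need a different formula and a genuine lower-bound proof. It would also have to treat $n\le 2c-1$ separately, since there the path power already has a universal vertex.
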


\begin{proof}
  Assume $t_{j}$ be the generators from $t_{1},t_{2},t_{3}, \cdots, t_{m}, \cdots, t_{k}$ for $j = 1,2,3, \cdots, k$ where $k = t_{m}+2, t_{m}+3, \cdots, n-1$ and $c_{i}$ be the cardinality of total number of generators for $i\geq5$ in the graph. Since, for each generating set greater equal to $5$ there are  $u_{1}, u_{2}, u_{3}, \cdots, u_{n}$ be the vertices. When last number of the generators is congruent to $1$ (mod $c_{i})$ for any $t_{j}$, $c_{i}$ and for all $t_{k}$, $n$ then there exist special pattern in the graph. That is the nodes: deg$(u_{1})$ = deg$(u_{n})$, deg$(u_{2})$ = deg$(u_{n-1})$, deg$(u_{3})$ = deg$(u_{n-2})$, deg$(u_{4})$ = deg$(u_{n-3})$, $\cdots$, deg $\dfrac{u_{n}+1}{2}$  = $V_{\Delta_{G}}$. So, there is equal number of nodes for each degree and can be obtain by measuring forward and backward distance from initial and final vertices respectively. Apply IDF $f$ and assign f$(u_{2})$, f$(u_{t_{m}+3})$, f$(u_{2t_{m}+4})$, f$(u_{3t_{m}+5})$, f$(u_{4t_{m}+6})$, $\cdots$, f$(u_{{n}})$ = $1$ to the nodes that are $1 (mod $ $c_{i})$ and the all other nodes that are not congruent to $1 (mod $ $c_{i})$ labelled by $0$ for all $i\geq 5$. Consequently, Italian dominating function is obtained and IDS =  $\{u_{2}, u_{t_{m}+3}, u_{2t_{m}+4}, u_{3t_{m}+5}, \cdots, u_{n}\}$.\newline

   The converse can be obtained from the fact that it is impossible to substitute any node in this sequence with another node that would increase the Italian dominance number.\newline

   \begin{figure}[h]
  \centering
  \includegraphics[ width=.25\linewidth]{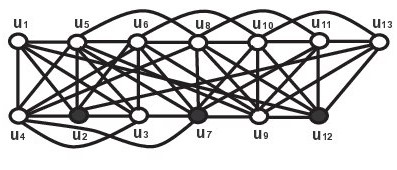}
  \caption{$T_{13}<1,2,3,4,11>$}\label{1}
\end{figure}
All three figures, explain the idea of IDN and black nodes have value of $1$.
  \begin{figure}[h]
  \centering
  \includegraphics[ width=.25\linewidth]{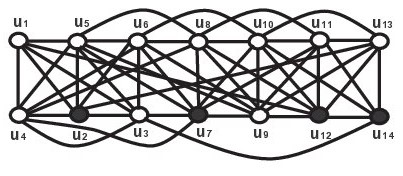}
  \caption{$T_{14}<1,2,3,4,11>$}\label{2}
\end{figure}

   \begin{figure}[h]
  \centering
  \includegraphics[ width=.25\linewidth]{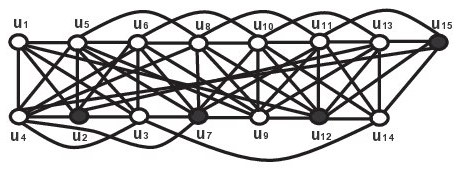}
  \caption{$T_{15}<1,2,3,4,11>$}\label{3}
\end{figure}

\end{proof}

\begin{theorem}
   Let for any $t_{j}$, $j = 1,2,3, \cdots, k$ and for all $ n $, when $t_{k} \equiv 0,2,3,4,5,\cdots, c_{i}-1 (mod $ $c_{i})$, $i\geq 4$ and $k = t_{m+3}, \cdots, n-1$ then $\Upsilon_I (T_{n}<t_{1},t_{2},t_{3}, \cdots, t_{m}, \cdots, t_{k}>) = \Big \lfloor \dfrac{n+c_{i}-2}{\mid c_{i}\mid} \Big \rfloor + 1 $
\end{theorem}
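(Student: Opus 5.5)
The plan mirrors the proof of the preceding theorem (the case $t_k \equiv 1 \pmod{c_i}$). It has two halves: an explicit Italian dominating function of weight $\lfloor (n+c_i-2)/c_i\rfloor + 1$, and a lower bound showing no lighter function exists.

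\textbf{Setup.} I read the generator set as $\{1,2,\dots,t_m\}$ together with a last generator $t_k \geq t_m+3$, and take $c_i = t_m+1$ as the period of the pattern, as in Lemma 2 and the preceding theorem. The interval part $\{1,\dots,t_m\}$ makes each $u_j$ adjacent to every $u_\ell$ with $|j-\ell|\le t_m$. So the graph contains the $t_m$-th power of the path $u_1,\dots,u_n$. The extra generator $t_k$ only adds the chords $u_ju_{j+t_k}$.

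\textbf{Upper bound.} I would copy the construction of the previous theorem. Label with $1$ the vertices $u_2, u_{t_m+3}, u_{2t_m+4}, \dots$, that is, every $(t_m+1)$-st vertex starting from $u_2$. Label every other vertex $0$, except that $u_n$ also receives $1$ when it is not already covered twice. Any vertex labelled $0$ lying between two consecutive labelled vertices is within distance $t_m$ of both, so its neighbourhood sum is at least $2$. The work is at the two ends. The vertex $u_1$ sees $u_2$, and it must see a second positive vertex. That second vertex is $u_{t_m+1}$ or $u_{1+t_k}$, and here the residue of $t_k$ modulo $c_i$ matters. When $t_k \equiv 1 \pmod{c_i}$, the chord from $u_1$ lands exactly on a labelled vertex $u_{2+s(t_m+1)}$, which is what saved a unit in the preceding theorem. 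For every other residue $0,2,3,\dots,c_i-1$, the chord from $u_1$ misses the labelled class. One extra label (at $u_1$ or $u_n$) is then needed, which accounts for the $+1$ and the switch from $\lceil\cdot\rceil$ to $\lfloor\cdot\rfloor+1$. Counting the labelled vertices gives $\lfloor (n+c_i-2)/c_i\rfloor+1$.

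\textbf{Lower bound.} This is the main obstacle. I would use a discharging/counting argument. Each vertex labelled $1$ contributes to the sums of at most $2t_m+2$ vertices: $2t_m$ through the interval generators and at most $2$ through the $t_k$-chords. Each vertex labelled $0$ needs total weight $2$ from its neighbours. Summing over vertices gives $2(n - w) \le w(2t_m+2)$, hence $w \ge n/(t_m+2)$. This has the right order but not the exact constant, so it must be refined near the ends. The vertices $u_1,\dots,u_{c_i}$ and $u_{n-c_i+1},\dots,u_n$ have fewer interval neighbours. I would show that a window of $c_i$ consecutive vertices carrying total weight $\le 1$ forces its $0$-vertices to be covered by chords. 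For $t_k \not\equiv 1 \pmod{c_i}$, the chords from consecutive windows cannot all land on positive vertices simultaneously. This last point is exactly the failure of the alignment used in the upper bound, so the lower bound should also check against Lemma 2 and the corollary in small cases.

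I expect this exact lower bound, and even the precise statement (the role of $k = t_{m+3},\dots,n-1$), to need correction. If the bound fails for small $n$, I would restrict to $n \geq 2t_k$ and verify the small orders directly, as in Theorem 2.1.
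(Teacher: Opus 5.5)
Your lower-bound half cannot be completed, because the statement itself is false. The step that fails is the claim that for $t_k\not\equiv 1 \pmod{c_i}$ ``the chords from consecutive windows cannot all land on positive vertices simultaneously''. That holds only for labellings of period $c_i=t_m+1$, and nothing forces an optimal labelling to have that period.

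Take the smallest admissible last generator $t_k=t_m+3$. It is $\equiv 2 \pmod{c_i}$ but $\equiv 1 \pmod{t_m+2}$. Label with $1$ every $u_j$ with $j\equiv 1 \pmod{t_m+2}$.
\begin{itemize}
\item A $0$-vertex at offset $r\in\{2,\dots,t_m\}$ after a labelled vertex sees that vertex and the next labelled one, at distances $r$ and $t_m+2-r$.
\item At offset $1$ it sees the previous labelled vertex and, through the chord of length $t_m+3$, the one before that.
\item At offset $t_m+1$ it sees the next labelled vertex and, through the chord, the one after that.
\end{itemize}
So $\Upsilon_I\le n/(t_m+2)+O(1)$. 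Your crude counting bound $n/(t_m+2)$ is essentially attained, and the gap to $n/(t_m+1)$ is a bulk phenomenon that no refinement near the ends can close.

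Concretely, take $T_{46}\langle 1,2,3,6\rangle$, so $c_i=4$ and $t_k=6\equiv 2\pmod 4$. The vertices $u_1,u_6,u_{11},\dots,u_{46}$, together with $u_2$ and $u_{45}$, form an Italian dominating set of weight $12$. The two extra vertices are needed because $u_2$ and $u_{45}$ are the only vertices whose needed chord would leave the graph. The formula gives $\lfloor 48/4\rfloor+1=13$. The discrepancy grows like $n/20$, so restricting to $n\ge 2t_k$ does not rescue anything. Small orders fail as well: $u_4$ is adjacent to every vertex of $T_7\langle 1,2,3,6\rangle$, so $\Upsilon_I=2<3$.

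What does hold is the upper bound, and there the residue of $t_k$ plays no role. Adding the chords cannot increase $\Upsilon_I$. In the $t_m$-th power of the path, the labelling $u_1,u_{1+c_i},u_{1+2c_i},\dots$ plus $u_n$ (the paper's construction) has weight exactly $\lfloor (n-2)/c_i\rfloor+2=\lfloor (n+c_i-2)/c_i\rfloor+1$.

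Your progression starting at $u_2$ needs a repair at $u_1$ \emph{and} at $u_n$ whenever $n\not\equiv 2\pmod{c_i}$, so it overshoots by one. For $T_{11}\langle 1,2,3,6\rangle$ it gives $5$ instead of $4$.

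The paper's proof does not supply the missing lower bound either. Its converse only checks that shifting the period-$c_i$ pattern to start at $u_2$ or $u_3$ costs an extra vertex. That says nothing about patterns of other periods, such as the one above.
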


\begin{proof}
  Let $t_{j}$ be the generators from $t_{1},t_{2},t_{3}, \cdots, t_{m}, \cdots, t_{k}$ for $j = 1,2,3, \cdots, k$ where $k = t_{m}+3, t_{m}+4, \cdots, n-1$ and $c_{i}$ be the cardinality of total number of generators for $i\geq4$ in the graph. It means we need at least $4$ generators to start the process. Let start with initial value for which  $t_{j}$ varies from $6$ to $k<n$. Previous theorem gives explanation of Italian domination number for which values congruent to $1$ (mod $c_{i})$ and here these values are excluded. Since, $6 \equiv 2$ (mod $4)$, $7 \equiv 3$ (mod $4)$, $8 \equiv 0$ (mod $4)$, $10 \equiv 2$ (mod $4)$ and so on. Let,  $u_{1}, u_{2}, u_{3}, \cdots, u_{n}$ be the nodes and $c_{i}$ for $i = 4$ then for all possible values of $k$ goes from $6$ to $n-1$. Then Italian dominating function can be obtained by applying $f$ under value $1$ to f$(u_{1})$, f$(u_{5})$, f$(u_{9})$ $\cdots$, f$(u_{n})$. Hence, IDS = $\{u_{1}, u_{5}, u_{9}, \cdots, u_{n}\}$. \newline
  Moreover, apply function under same operation on any $c_{i}$, $i\geq 5$ which describe the cardinality of total number of generators. Take any $t_{j}$, $j = 1,2,3, \cdots, k$ and $k = t_{m}+3, t_{m}+4, \cdots, n-1$. Then the degree sequence of nodes for each graph is deg$(u_{1})$ = deg$(u_{n})$ = $c_{i}$, deg$(u_{2})$ = deg$(u_{n-1})$ = $c_{i}+1$, deg$(u_{3})$ = deg$(u_{n-2})$ = $c_{i}+2$, $\cdots$  deg$\dfrac{u_{n}+1}{2}$ for $n$ is odd and  deg$\dfrac{u_{n}}{2}$ for $n$ is even = $V_{\Delta_{G}}$ have same degree. Also, if the vertex $u_{4}$ is of degree $(c_{i}+2)$ then $u_{n-3}$ must have same degree. These properties make it unique and by using these properties Italian domination function can easily obtained. Thus, f$(u_{1})$ = f$(u_{t_{m}+2})$ = f$(u_{2t_{m}+3})$ = f$(u_{3t_{m}+4})$ = f$(u_{4t_{m}+5})$ = $\cdots$, f$(u_{{n}})$ = $1$ and all other remaining nodes in the graph labelled $0$. The function on these nodes helps to obtain desired IDN for any $c_{i}$, $i\geq 4$ and for all $t_{j}$. By using lemma $2$ and proposition $2$ all excluded values can be obtained. Consequently, IDN = $ \Big \lfloor \dfrac{n+c_{i}-2}{\mid c_{i}\mid} \Big \rfloor + 1 $ and IDS = $\{u_{1}, u_{t_{m}+2}, u_{2t_{m}+3}, u_{3t_{m}+4}, \cdots, u_{n}\}$.\newline
  \textbf{Conversely},

  Let $t_{1},t_{2},t_{3}, \cdots, t_{m}, \cdots, t_{k}$  be the generators and denoted as $t_{j}$ for  $j = 1,2,3, \cdots, k$. Let for the minimum generator values $c_{i} = 4$ to the maximum generator values $c_{i} = n-1$. Then for the different values of $t_{k}\not\equiv 1$ have values of $n$ that are $6 \equiv 2$ (mod $4)$, $7 \equiv 3$ (mod $4)$, $8 \equiv 0$ (mod $4)$ and so on. Since  deg$(u_{1})$ = deg$(u_{n})$ = $c_{i}$, deg$(u_{2})$ = deg$(u_{n-1})$ = $c_{i}+1$, deg$(u_{3})$ = deg$(u_{n-2})$ = $c_{i}+2$, $\cdots$  deg$\dfrac{u_{n}+1}{2}$ for $n$ is odd and  deg$\dfrac{u_{n}}{2}$ for $n$ is even = $V_{\Delta_{G}}$. Consider the function on f$(u_{1}) = 0$ and f$(u_{2}) = 1$ then sum of all neighbors of $u_{1}$ is not $2$. If the function on f$(u_{2}) = 0$ and f$(u_{3}) = 1$ then sum of all neighbors is not $2$. It means we need at least one more node to complete the process. So, the set generated through these nodes have $\lceil \dfrac{n}{4}\rceil$ $+ 1$ labelled nodes. But the minimum function can be obtained through $\lceil\dfrac{n}{4}\rceil$ nodes. Thus, any changing in the sequence cannot provide minimum Italian dominating set.

\end{proof}

\section{Conclusion}

This particular piece of writing sheds insight on the Italian dominance number of generalised Toeplitz graphs. Furthermore, we examine the minimum Italian domination sets for all generalized graphs constructed using different generators. These sets are sufficient to applying the domination function on the graph.


\begin{thebibliography}{22}

\bibitem{1}
	Abrishami, G., Henning, M.A.: Independent domination in subcubic graphs of girth at least six, Discrete Math. 341, no. 1, 155–164 (2018).
\bibitem{2}
	Behzad, A., Behzad, M., Cheryl, Praeger, C.E.: Fundamental dominations in graphs, arXiv, (2008).
\bibitem{3}
	Babikir, A., Henning, M.A.: Domination versus independent domination in graphs of small regularity, Discrete Math. 343, no. 7, 111727 (2020).
\bibitem{4}
Beeler, R.A., Haynes, T.W., Hedetniemi, S.T.: Double roman domination. Discret. Appl. Math. 211, 23–29 (2016).
\bibitem{5}
Bai, X., Gao, Z., Xi, C., Yue, J.:  On independent domination and packing numbers of subcubic graphs, arXiv, (2024).
\bibitem{6}
	Chellali, M., Haynes, T.W., Hedetniemi, S.T., McRae, A.A.: Roman {2}-domination. Discret. Appl. Math. 204, 22–28 (2016)
\bibitem{7}
Cho, E.K.,  Kim, J., Kim, M., Oum, S.I.: Independent domination of graphs with bounded maximum degree, J. Combin. Theory, Ser. B, Vol. 158, Part 2,  341-352 (2023)
	
\bibitem{8}
	Dorbec, P., Henning, M.A.,  Montassier, M., Southey, J.: Independent domination in cubic graphs, J. Graph Theory. 80 (2015), no. 4, 329–349 (2015)

\bibitem{9}
  Gao, H., Wang, P., Liu, E., Yang, Y.: More results on Italian domination in $C_{n}\ast C_{m}$, Mathematics, vol. 8, issue 4, 8040465 (2020)
\bibitem{10}
	Hao, G., Hu, K., Wei, S., Global, Z.X.: Italian domination in graphs. Quaest. Math. 41, 1–15 (2018).
\bibitem{11}
 Henning, M.A., Hedetniemi, S.T.: Defending the roman empire—a new strategy, Discrete Math. Vol. 266, 239-251 (2003)	
 \bibitem{12}
 Haynes, T.W., Henning, M.A.: Perfect Italian domination in trees. Discret. Appl. Math. 260, 164–177 (2019)
 \bibitem{13}
 Haynes, T.W., Hedetniemi, S.T., Henning, M.A.: Topics in domination in graphs, vol. 64. Springer Nature, Cham (2020)
  \bibitem{14}
 Leel, S., Srivastav, S.,  Ganesan, G.: Domination number in the context of some new graphs, Eng. Proceeding, 62, 14, 20240-62014 (2024)
\bibitem{15}
	Mojdeh, D.A., Masoumi, I., Volkmann, L.: Restrained double roman domination of a graph, Rairo Oper. Research, 2293–2304 (2022)
\bibitem{16}
Mojdeh, D.A., Volkmann, L.: Roman {3}-domination (double Italian domination). Discret. Appl. Math. 283, 555–564 ((2020))

\bibitem{17}
	Malik, S.: Hamiltonicity in directed Toeplitz graphs with $s_{1}=1$ and $s_{3}=4$, Comput. Mathem. Methods, 3676487 ( 2023)
\bibitem{18}
	Mojallal, S.A., Jung, J.H., Cheon, G.I., Kimb, S.R., Kang, B.: Structural properties of Toeplitz graphs, Discrete Mathematics, Vol. 345, Issue 11, 113016 (2022)

\bibitem{19}
	Nicoloso, S., Pietropaoli, U.: On the chromatic number of Toeplitz graphs, Discrete Applied Math. Vol. 164, Part 1, 286-296 (2014)
\bibitem{20}
	Nadeem, M.F., Qu, S., Ahmad, A., Azeem, M.: Metric dimension of some generalized families of Toeplitz graphs, Math. Problems in Engineering, 9155291 (2022)
\bibitem{21}
Prabhu, S., Arulmozhi, A.K., Arulperumjothi, M.: On power domination in certain chemical graphs. Int. J. Pure and Appl. Math. 118(11), 11–19 (2018)
\bibitem{22}
	Rahmouni, A., Chellali, M.: Independent roman {2}-domiantion in graphs. Discrete Appl. Math. 2018, 236, 408–414 (2018)
\bibitem{23}
	Rad, N.J., Volkmann, L.: A note on the independent domination number in graphs, Discrete Appl. Math. 161(18), 3087–3089 (2013)
	
\bibitem{24}
	Shabbir, A., Nadeem, M.F., Zamfirescu, T.: The property of hamiltonian connectedness in Toeplitz graphs, Complexity, 2020, 5608720 (2020)
\bibitem{25}
	van Dal, R., Tijssen, G., Tuza, Z., van der Veen, J., Zamfirescu, C.H., Zamfirescu, T.: Hamiltonian properties of Toeplitz graphs. Discret. Math. 159, 69–81 (1996)
\bibitem{26}
Yan, H., Kang, L., Xu, G.: The exact domination number of the generalized Petersen graphs, Discrete Math. 309, 2596–2607 (2009)
\bibitem{27}
	Zhao, M., Kang, L., Chang, G.J.: Power domination in graphs. Discrete Math. 306(15), 1812–1816 (2006)

\end{thebibliography}
\end{document}